\documentclass[12pt]{amsart}

\usepackage[utf8]{inputenc}

\usepackage{amssymb,latexsym}
\usepackage[curve]{xypic}

\setlength{\textwidth}{14cm}

\newtheorem{theorem}{Theorem}[section]
\newtheorem{proposition}[theorem]{Proposition}
\newtheorem{corollary}[theorem]{Corollary}
\newtheorem{remark}[theorem]{Remark}
\newtheorem{lemma}[theorem]{Lemma}

\newtheorem{question}[theorem]{Question}

\input xy
\xyoption{all}

\numberwithin{equation}{section}

\usepackage[all]{xy}
\begin{document}


\vspace{2cm}






\title[Hilbert scheme]{On the Hilbert scheme of the moduli
space of vector bundles over an algebraic curve}

\author{L. Brambila-Paz}

\address{CIMAT, Apdo. Postal 402, C.P. 36240.
 Guanajuato, Gto,
M\'exico}

\email{lebp@cimat.mx}

\author{O. Mata-Gutierrez }

\address{CIMAT, Apdo. Postal 402, C.P. 36240.
 Guanajuato, Gto,
M\'exico}

\email{osbaldo@cimat.mx}

 \keywords{moduli spaces, Hecke cycles, Grassmannian, Hilbert scheme, morphisms}

\subjclass[2000]{14H60, 14J60}

\date{}
\begin{abstract} Let $M(n,\xi)$ be the moduli space
of stable vector bundles of rank $n\geq 3$ and fixed determinant
$\xi$ over a complex smooth projective algebraic curve $X$ of
genus $g\geq 4.$ We use the gonality of the curve and $r$-Hecke
morphisms to describe a smooth open set of an irreducible
component of the Hilbert scheme of $M(n,\xi)$, and to compute its
dimension. We prove similar results  for the scheme of morphisms
$Mor_P(\mathbb{G},M(n,\xi))$ and  the moduli space of stable
bundles over $X\times \mathbb{G},$ where $\mathbb{G}$ is the
Grassmannian $\mathbb{G}(n-r,\mathbb{C}^n)$. Moreover, we give
sufficient conditions for $Mor_{2ns}(\mathbb{P}^1,M(n,\xi))$ to be
non-empty, when $s\geq 1$.

\end{abstract}

\maketitle

\section{Introduction}

Studying the geometry of varieties frequently involves the
understanding of their subvarieties. The Hilbert scheme $Hilb_Y$,
the Chow scheme $C(Y)$ and the scheme of morphisms $Mor(-,Y)$
 each provides a certain compactification of the space of
irreducible subvarieties of a projective variety $Y.$

 Let $M(n,\xi)$ be the moduli space
of stable vector bundles of rank $n\geq 3$ and fixed determinant
$\xi$ over a complex smooth projective algebraic curve $X$ of
genus $g\geq 4.$  In this paper, we are interested in studying the
Hilbert scheme $Hilb_{M(n,\xi)}^P$ for a fixed Hilbert polynomial
$P$ of degree $>1$.

It is worth pointing out that for  Hilbert polynomials $P$ of
degree $1$ it was proved in \cite{ram2} that
 there exists a component of $Hilb^{P}_{{M(2,\mathcal{O})}}$
 that provides a non-singular
model for $M(2,\mathcal{O})$.  For $M(2,\mathcal{O}_X(-1)),$ the
Hilbert scheme $Hilb^{m+1}_{{M(2,\mathcal{O}_X(-1))}}$ and the
Chow scheme of degree $1$ curves coincide with the moduli space of
stable maps $\bar{M}_0(M(2,\mathcal{O}_X(-1)),1)=Mor_1(\mathbb{P}^1,M(2,\mathcal{O}_X(-1)))
$ (see \cite{vicente} and
\cite{kilaru}). In \cite{kiem}, Kiem proved that the Hilbert
scheme $Hilb^{2m+1}_{{M(2,\mathcal{O}_X(-1))}}$ of conics, the scheme of morphisms
$Mor_2(\mathbb{P}^1,M(2,\mathcal{O}_X(-1)))$ and the
Chow scheme of conics are related by
contractions. To prove these results, they used the Hecke
correspondence and the space of Hecke curves. The {\it Hecke
correspondence} for vector bundles, was introduced by Narasimhan
and Ramanan  in \cite{ram1} and has proved to be one of the most
powerful tools in the study of $M(n,\xi)$. It can be summed up in the following diagram
(see \cite{ram1} and \cite{ram2})
$$
\xymatrix@1{
&{\mathbb{P}(U_x)}\ar[dl]_{\pi}\ar[dr]^{q}& \\
M(n,\xi (x))&&M(n,\xi ),}\\
$$
where $x\in X,$ $\pi:\mathbb{P}(U_x)\to M(n,\xi (x))$ is
the restriction of the projective Poincar\'e bundle to
$\{x\}\times M(n,\xi (x))$ and, at a generic point $F\in M(n,\xi ),$
$q:\mathbb{P}(U_x)\to M(n,\xi ) $ is a projective fibration with fibre $\mathbb{P}(F_x).$
 Narasimhan and Ramanan used
$(0,1)$ and $(1,0)$-stable bundles (see \S 2.3) to define
subschemes of $M(n,\xi )$, called {\it good Hecke cycles}.
The main concern in \cite{ram1} was the description of the deformations of the moduli space $M(n,\xi )$; moreover,
 the lines in the good Hecke cycles and their properties were implicitly described
there. Such lines were called later {\it Hecke curves} by Hwang (see
\cite{hwang1} and \cite{hwang2}) and they turn out to be the minimal rational curves in $M(n,\xi )$
(see \cite{hwang2}, \cite{muksun} and \cite{sun}). The moduli
space and the tangent spaces  of the minimal curves in $M(n,\xi)$
have been studied principally for rank $2$ in \cite{muksun},
\cite{hwang1}, \cite{hwang2}, \cite{isong}, \cite{sun} and
\cite{castel}. The interest in studying $Mor(\mathbb{P}^1,
M(n,\xi))$ also has its origin in attempts to compute the quantum
cohomology of the moduli space $M(n,\xi),$ which has recently
become an important topic of research (see e.g. \cite{vicente};
\cite{witten}; \cite{bertram}; \cite{bertram2} and \cite{kiem}).

 For an integer $0<r<n$, the problem that we
address in this article is to provide a description of the set of
maps from the
 Grassmannians
$\mathbb{G}=\mathbb{G}(n-r,\mathbb{C}^n)$ to $M(n,\xi )$.
 Our purpose is to describe a smooth open set which is both an
irreducible component
 of $Hilb^{P}_{M(n,\xi)}$ and of $Mor _P(\mathbb{G},M(n,\xi ))$,
 where $P$ is the Hilbert polynomial associated to the Grassmannian $\mathbb{G}$.

 In order to state our results we recall from \cite{mata1} that using $(k,\ell)$-stable bundles, one can
generalise the idea of the good Hecke cycles to obtain the notion
of {\it Hecke Grassmannians} (see \S 2) in $M(n,\xi)$.
 It has recently been noted by  O.
Mata-Guti\'errez that through a very general point $E\in
M(n,\xi)$, there exists Hecke Grassmannians passing through $E$.
The Hecke Grassmmannian is called an {\it $r$-Hecke cycle}, with
$0<r<n$, if it defines a closed subscheme. The good Hecke cycles
are precisely those where $r=1$.

 The set of  $r$-Hecke
cycles form an irreducible family (see \S 3). Let $\mathcal{HG}$
be the irreducible component of the Hilbert scheme $Hilb _{M(n,\xi
)}^{P}$ of $M(n,\xi)$ containing $r$-Hecke cycles. The main idea
behind the study of $\mathcal{HG}$ is to apply the
$(k,\ell)$-stability (see \S 2.3). For integers $k,\ell ,$ $1\leq
r\leq n-1$
 chosen such that
$$ 0\leq k(n-1)+\ell+r<(n-1)(g-1)$$
and
$$0\leq k+(\ell+r)(n-1)<(n-1)(g-1),$$
we construct in Section \S 3 {\it a fibration  $p:\mathcal{A}\to
X$}  where the
 fibre $p^{-1}$ is identified with the set of $(k,\ell)$-stable
vector bundles of rank $n$ and determinant $\xi(rx)$, for all $x\in X.$
Now we state our main result which is a generalisation of those in \cite{ram2}

\begin{theorem}\label{teoprin} If $(n,d)=1$ and $r$ is less than the gonality of $X$
then
\begin{enumerate}
\item there is an algebraic isomorphism $\Upsilon $ from
$\mathcal{A}$ to an open subscheme of $\mathcal{HG}$. \item The
Hilbert scheme $\mathcal{HG}$ is smooth at $\Upsilon (z),$ for any
$z \in \mathcal{A}.$ \item  $\dim \mathcal{HG}= (n^2-1)(g-1)+1.$
\item Locally, the deformations of $r$-Hecke cycles are $r$-Hecke
cycles.
\end{enumerate}
\end{theorem}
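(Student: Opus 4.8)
The plan is to exhibit $\mathcal{A}$ as the parameter space of an explicit flat family of $r$-Hecke cycles, prove the classifying morphism $\Upsilon$ is \'etale and injective, and then extract smoothness and dimension from the deformation theory of the Hilbert scheme. First I would build $\Upsilon$. Starting from the universal bundle $\mathcal{F}$ on $X\times\mathcal{A}$ of \S 3, restrict it to the graph of $p$ in $X\times\mathcal{A}$ to obtain a rank-$n$ bundle $\mathcal{F}_x$ on $\mathcal{A}$, pass to the relative Grassmann bundle $q\colon\mathcal{G}=\mathbb{G}(n-r,\mathcal{F}_x)\to\mathcal{A}$, and form the elementary transformation of $q^{*}\mathcal{F}$ along the tautological subbundle supported over the graph; since the $F$'s are $(k,\ell)$-stable for the chosen $k,\ell$ this produces a family $\mathcal{E}$ of stable rank-$n$ bundles of determinant $\xi$ over $X$, parametrised by $\mathcal{G}$. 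As $(n,d)=1$, $M(n,\xi)$ is a fine moduli space, so $\mathcal{E}$ is classified by a morphism $\mathcal{G}\to M(n,\xi)$ whose fibrewise images are, by the construction of \S 2, $r$-Hecke cycles, hence closed subschemes isomorphic to $\mathbb{G}=\mathbb{G}(n-r,\mathbb{C}^n)$ and embedded by a fixed multiple of the ample generator of $\mathrm{Pic}(\mathbb{G})$. They therefore share one Hilbert polynomial $P$, the family $\{Z_a\}_{a\in\mathcal{A}}$ is flat over the smooth (hence reduced) base $\mathcal{A}$, and we obtain $\Upsilon\colon\mathcal{A}\to Hilb^{P}_{M(n,\xi)}$. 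Here $\mathcal{A}$ is smooth and irreducible, fibred over $X$ with fibres open in the $(n^2-1)(g-1)$-dimensional moduli spaces $M(n,\xi(rx))$, so $\dim\mathcal{A}=(n^2-1)(g-1)+1$.

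Next I would show that $\Upsilon$ is injective and unramified. For injectivity one reconstructs $(x,F)$ from $Z=Z_{(x,F)}$: every member $E$ of the family $\mathcal{U}|_{X\times Z}$ (restriction of the Poincar\'e bundle of $M(n,\xi)$) satisfies $F(-x)\subseteq E\subseteq F$ with $E/F(-x)$ an $(n-r)$-plane in $F_x$, so once $Z$ is known to be of Hecke type, $F$ and $x$ are determined by this family; the essential point is to exclude a second pair $(x',F')\neq(x,F)$ yielding the same subscheme, and this is where $r<\mathrm{gon}(X)$ is used — comparing the two elementary modifications of a common member produces a nonconstant map $X\to\mathbb{P}^1$ of degree at most $r$, which the hypothesis forbids. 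The injectivity of $d\Upsilon$ is the infinitesimal counterpart: a tangent vector in the kernel is a first-order deformation of $(x,F)$ not moving the cycle, and unwinding it over $\mathbb{C}[\epsilon]$ again yields a low-degree pencil on $X$; alternatively one computes $d\Upsilon\colon T_a\mathcal{A}\to T_{[Z]}Hilb^{P}_{M(n,\xi)}=H^0(Z,N_{Z/M(n,\xi)})$ directly, $T_a\mathcal{A}$ being an extension of $T_xX$ by $H^1(X,\mathrm{End}_0 F)$.

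The core of the argument is the computation of the cohomology of the normal bundle of $Z$. Write $j\colon\mathbb{G}\hookrightarrow M(n,\xi)$ for the embedding of a Hecke cycle and $\mathcal{E}$ for the universal bundle on $X\times\mathbb{G}$ from the Hecke construction. Stability of the bundles parametrised by $Z$ gives $p_{\mathbb{G}*}\mathcal{E}nd_0(\mathcal{E})=0$, hence $j^{*}T_{M(n,\xi)}=R^1p_{\mathbb{G}*}\mathcal{E}nd_0(\mathcal{E})$ and, by Leray, $H^{q}(\mathbb{G},j^{*}T_{M(n,\xi)})=H^{q+1}(X\times\mathbb{G},\mathcal{E}nd_0(\mathcal{E}))$. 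Feeding this into the normal sequence $0\to T_{\mathbb{G}}\to j^{*}T_{M(n,\xi)}\to N_{Z/M(n,\xi)}\to 0$ together with $H^{i}(\mathbb{G},T_{\mathbb{G}})=0$ for $i\geq 1$ (rigidity of the Grassmannian, Bott vanishing) reduces everything to the cohomology of $\mathcal{E}nd_0(\mathcal{E})$ on $X\times\mathbb{G}$: the Hilbert scheme is smooth at $[Z]$ precisely when $H^{q}(X\times\mathbb{G},\mathcal{E}nd_0(\mathcal{E}))=0$ for $q\geq 2$, and then $h^0(Z,N_{Z/M(n,\xi)})=h^1(X\times\mathbb{G},\mathcal{E}nd_0(\mathcal{E}))-(n^2-1)$. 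I would obtain both from the defining sequence $0\to\mathcal{E}\to p_X^{*}F\to i_{*}Q\to 0$ on $X\times\mathbb{G}$, where $Q$ is the tautological rank-$r$ quotient and $i$ the inclusion of the copy of $\mathbb{G}$ over $x$, combined with K\"unneth, Bott vanishing for the homogeneous bundles built from the tautological bundles of $\mathbb{G}$, and the cohomological consequences of the prescribed $(k,\ell)$-stability of $F$ — it is exactly here that the two displayed inequalities relating $k,\ell,r$ and the bound $g\geq 4$ are consumed. The outcome should be $H^{q}(X\times\mathbb{G},\mathcal{E}nd_0(\mathcal{E}))=0$ for $q\geq 2$ and $h^1(X\times\mathbb{G},\mathcal{E}nd_0(\mathcal{E}))=(n^2-1)g+1$, so that $h^0(Z,N_{Z/M(n,\xi)})=(n^2-1)(g-1)+1=\dim\mathcal{A}$. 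I expect this package of vanishings and Euler-characteristic computations — and the bookkeeping showing that exactly the theorem's hypotheses make it go through — to be the main obstacle.

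Assembling: $Hilb^{P}_{M(n,\xi)}$ is smooth at every point $\Upsilon(z)$ of dimension $(n^2-1)(g-1)+1=\dim\mathcal{A}$, so $d\Upsilon$, being injective between tangent spaces of equal dimension, is an isomorphism; hence $\Upsilon$ is \'etale, and being injective it is an open immersion, i.e. an isomorphism onto an open subscheme of $Hilb^{P}_{M(n,\xi)}$ — this is (1), and (2) is the smoothness just obtained. Since $\Upsilon(\mathcal{A})$ is an open subset of the irreducible component $\mathcal{HG}$, and an open subset of an irreducible scheme has its dimension, $\dim\mathcal{HG}=\dim\mathcal{A}=(n^2-1)(g-1)+1$, which is (3). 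Finally, $\Upsilon(\mathcal{A})$ being open in $Hilb^{P}_{M(n,\xi)}$, every closed subscheme sufficiently near $\Upsilon(z)$ is of the form $\Upsilon(z')$ for some $z'\in\mathcal{A}$, hence an $r$-Hecke cycle; this is (4).
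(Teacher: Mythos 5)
Your architecture coincides with the paper's (construct $\Upsilon$ from the universal elementary transformation, use the gonality bound twice --- for injectivity of each $\phi_z$ and of $\Upsilon$ itself --- then reduce everything to the cohomology of $N_{Z/M}$), and your final assembly of (1)--(4) from ``\'etale $+$ injective $\Rightarrow$ open immersion'' is clean and correct. Where you genuinely diverge is in the central computation. You propose to compute $H^q(\mathbb{G},\phi_z^*TM)=H^{q+1}(X\times\mathbb{G},\mathcal{E}nd_0\mathcal{E})$ by Leray and then attack the right-hand side with the defining sequence $0\to\mathcal{E}\to p_X^*F\to i_*Q\to 0$, K\"unneth and Bott vanishing. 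The paper avoids the endomorphism sheaf entirely: it fits $\mathbb{G}(z)$ into the correspondence $\mathcal{A}\leftarrow\mathbb{G}(\mathcal{U})\rightarrow M(n,\xi)$, observes that $N_{\mathbb{G}/\mathbb{G}(\mathcal{U})}$ is the trivial bundle $\mathcal{O}_{\mathbb{G}(z)}\times T_z\mathcal{A}$, and so reduces everything to showing $H^i(T_{\Phi}|_{\mathbb{G}(z)})=0$ for all $i$; this last vanishing comes from identifying $T_{\Phi}|_{\mathbb{G}(z)}$, via a tautological section of $\mathbb{G}(r,\mathcal{P}_{\mathcal{U}})$, with an extension of $\mathcal{O}_{\mathbb{G}}\otimes T_xX$ by $\Omega^1\mathbb{G}$ whose coboundary $\mathbb{C}\to H^1(\Omega^1\mathbb{G})=\mathbb{C}$ is an isomorphism. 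That route buys two things your route does not give for free: the surjection $\mathcal{O}_{\mathbb{G}(z)}\times T_z\mathcal{A}\to N_{\mathbb{G}/M}$ immediately exhibits $N$ as globally generated and identifies $H^0(N)$ with $T_z\mathcal{A}$ without any Euler-characteristic bookkeeping.

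The genuine gap is that your version of this central step is asserted rather than carried out: the vanishing $H^q(X\times\mathbb{G},\mathcal{E}nd_0\mathcal{E})=0$ for $q\ge 2$ and the count $h^1=(n^2-1)g+1$ are exactly the theorem's content, and ``I expect this package of vanishings to be the main obstacle'' leaves the proof unfinished at its hardest point. (The expected numbers are right, and the computation can be pushed through in the Narasimhan--Ramanan style, but note that tensoring the defining sequence to reach $\mathcal{E}nd(\mathcal{E})$ forces you to control $i^*\mathcal{E}$, which is the nonsplit-looking extension $0\to Q\to i^*\mathcal{E}\to S\to 0$ of the paper's Proposition \ref{propprin}; without that splitting statement the Bott-vanishing bookkeeping does not close up.) One smaller misplacement: the inequalities on $k,\ell,r$ are not consumed in the cohomology of $\mathcal{E}nd_0\mathcal{E}$ --- Bott vanishing on $\mathbb{G}$ and simplicity of stable bundles need no such hypothesis --- they are consumed earlier, in Proposition \ref{propkl}, to guarantee that every elementary transformation $E^W$ is stable, i.e.\ that $\phi_z$ lands in $M(n,\xi)$ at all and that $p_{\mathbb{G}*}\mathcal{E}nd_0\mathcal{E}=0$.
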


It would be
interesting to prove, in the non-coprime case, that our component
 is actually smooth and provides a non-singular
model for $M(n,\xi)$, as was proved for $n=2 $ and $\xi$ the
trivial bundle in \cite{ram1}.

The proof of Theorem \ref{teoprin} is similar in spirit to the proof of \cite[Theorem 5.13]{ram2}
(see also \cite{tyurin}).
 While some of the results in \cite{ram1} and
\cite{ram2} apply in our case, the main results need  extra
hypotheses, such as a condition on the gonality of the curve.
 We use the gonality of the curve to prove that
 any  $z\in \mathcal{A}$ defines a closed embedding $\phi _z:\mathbb{G}\to M(n,\xi)$ of a
 Grassmannian $ \mathbb{G}$ to $M(n,\xi)$,
 and for the injectivity of $\Upsilon:\mathcal{A}\to  \mathcal{HG} $.
 In our case we construct a diagram (see diagram
\ref{diaguni})
$$
\xymatrix@1{
&\mathfrak{G}\ar[dl]_{\pi _1}\ar[dr]^{\Phi}& \\
\mathcal{A}&&M(n,\xi)}\\
$$
with $\pi _1$ a  Grassmannian  fibrations and the fibre of $\Phi$
at $F$ the Grassmannian bundle $p:\mathbb{G}(r,F)\to X,$ for each
$F$ in a suitable open set $\mathcal{B} \subset M(n,\xi)$. This
construction generalises that of  \cite{tyurin}, \cite{ram1}, and
\cite{ram2}.

One of the advantages of using Grassmannians $\mathbb{G}$ lies in
the fact that they have natural subvarieties, namely the Schubert
varieties. This allows us to study flag Hilbert schemes
parameterising  $t-$tuples $(\mathbb{G}, Y_1, \dots , Y_t)$ such
that $ Y_1 \subset \dots \subset Y_t \subset \mathbb{G}  $ are
subschemes of $M(n,\xi).$ However, this topic exceeds the scope of
this paper. We just consider nested pairs of subschemes
$\mathbb{P}^1 \stackrel{i}{\hookrightarrow }
\mathbb{G}\stackrel{j}{\hookrightarrow } M(n,\xi)$. In particular,
our viewpoint sheds some new light on the study of rational curves
on $M(n,\xi)$ allowing us to obtain many more rational curves of
higher degrees.

\begin{corollary}\label{coroprin}
If $Mor_s(\mathbb{P}^1,\mathbb{G})\ne \emptyset$ then
$Mor_{2ns}(\mathbb{P}^1,M(n,\xi))\ne \emptyset. $
Moreover, $\dim Mor_{2ns}(\mathbb{P}^1,M(n,\xi)) \geq (n^2-1)(g-1)+1.$
\end{corollary}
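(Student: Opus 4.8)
The plan is to build the composite embedding in two stages, using Theorem~\ref{teoprin} to produce a Grassmannian sitting inside $M(n,\xi)$ and then composing with a given rational curve in the Grassmannian. First I would take $z\in\mathcal{A}$; since $r$ is less than the gonality of $X$, Theorem~\ref{teoprin} guarantees a closed embedding $\phi_z:\mathbb{G}\hookrightarrow M(n,\xi)$. Given $f\in Mor_s(\mathbb{P}^1,\mathbb{G})$, the composite $\phi_z\circ f:\mathbb{P}^1\to M(n,\xi)$ is a morphism, and it lies in $Mor_d(\mathbb{P}^1,M(n,\xi))$ for $d=(\phi_z\circ f)_*[\mathbb{P}^1]\cdot H$, where $H$ is the ample generator of $\operatorname{Pic}(M(n,\xi))$. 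The degree computation is the crux: a line in $\mathbb{G}$ (a minimal rational curve, i.e.\ a Schubert line) maps under $\phi_z$ to a Hecke curve in $M(n,\xi)$, and Hecke curves have degree $2n$ with respect to $H$ (this is the normalisation used by Narasimhan--Ramanan and Hwang; it follows from the description of $q:\mathbb{P}(U_x)\to M(n,\xi)$ as a $\mathbb{P}(F_x)$-fibration and the fact that $\phi_z$ restricts on each Schubert line to such a fibre line). Since the class of $f_*[\mathbb{P}^1]$ in $H_2(\mathbb{G},\mathbb{Z})\cong\mathbb{Z}$ is $s$ times the class of a line, we get $(\phi_z\circ f)_*[\mathbb{P}^1]\cdot H = 2ns$, so $\phi_z\circ f\in Mor_{2ns}(\mathbb{P}^1,M(n,\xi))$, which is therefore non-empty.

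For the dimension bound I would exhibit a family of such morphisms of dimension at least $(n^2-1)(g-1)+1$. The point is that the construction $\phi_z$ depends on $z\in\mathcal{A}$, and by Theorem~\ref{teoprin}(1) distinct $z$ give distinct points $\Upsilon(z)$ of $\mathcal{HG}$, hence distinct images $\phi_z(\mathbb{G})\subset M(n,\xi)$; in particular the morphisms $\phi_z\circ f$ for varying $z$ (with $f$ fixed) are pairwise distinct, and they are parametrised by $\mathcal{A}$, which has dimension $\dim\mathcal{HG}=(n^2-1)(g-1)+1$ by Theorem~\ref{teoprin}(3). To turn this into a lower bound on $\dim Mor_{2ns}(\mathbb{P}^1,M(n,\xi))$ I need the assignment $z\mapsto \phi_z\circ f$ to be a morphism (or at least finite-to-one onto a subvariety) from $\mathcal{A}$ into the scheme of morphisms; this follows because the universal diagram $\mathfrak{G}\to\mathcal{A}$, $\mathfrak{G}\to M(n,\xi)$ mentioned in the excerpt gives a family of embeddings over $\mathcal{A}$, which pulls back along $f$ to a family of morphisms $\mathbb{P}^1\to M(n,\xi)$ over $\mathcal{A}$, inducing $\mathcal{A}\to Mor_{2ns}(\mathbb{P}^1,M(n,\xi))$. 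Injectivity of this map (up to the automorphisms of $\mathbb{P}^1$ preserving $f$, a finite or at worst $3$-dimensional ambiguity that one absorbs, or one simply notes the images still vary in an $(n^2-1)(g-1)+1$-dimensional way) gives $\dim Mor_{2ns}(\mathbb{P}^1,M(n,\xi))\geq (n^2-1)(g-1)+1$.

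I expect the main obstacle to be pinning down the degree constant $2n$ cleanly, that is, verifying that the restriction of $\phi_z$ to a minimal (Schubert) line of $\mathbb{G}$ is a Hecke curve of $H$-degree exactly $2n$ rather than some other multiple. This requires tracing through the construction of $\phi_z$ from $z\in\mathcal{A}$: a point of $\mathcal{A}$ over $x\in X$ is a $(k,\ell)$-stable bundle $G$ of rank $n$ and determinant $\xi(rx)$, and $\phi_z$ sends a subspace $W\in\mathbb{G}(n-r,G_x)$ to the elementary modification of $G$ at $x$ determined by $W$, which is a stable bundle of determinant $\xi$. A Schubert line in $\mathbb{G}(n-r,G_x)$ through $W$ corresponds to varying $W$ inside a pencil, and the resulting family of modifications is precisely a Hecke curve in the sense of Hwang; the degree-$2n$ fact is then Lemma-level input one can cite from \cite{hwang2} or \cite{ram2} once the identification is made. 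A secondary, more technical point is ensuring the parametrising family over $\mathcal{A}$ is actually flat over $\mathbb{P}^1$ so that it defines a genuine $\mathcal{A}$-point of $Mor_{2ns}$; this is automatic since $\mathbb{P}^1$ is a smooth curve and the total space is a family of morphisms, but it should be stated. Once these are in hand the corollary follows formally from Theorem~\ref{teoprin}.
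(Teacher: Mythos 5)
Your proposal is correct and is essentially the paper's argument: the paper also proves this by composing, via the natural map $Mor_s(\mathbb{P}^1,\mathbb{G})\times Mor_P(\mathbb{G},M(n,\xi))\to Mor_{2ns}(\mathbb{P}^1,M(n,\xi))$, the given rational curve with the $r$-Hecke morphisms parameterised by $\mathcal{A}$, whose space has dimension $(n^2-1)(g-1)+1$ by Theorem \ref{teoprin2}. The degree constant you worry about is already supplied by Proposition \ref{propdegree} ($\deg\phi_z^*(-K_M)=2n$, with $\operatorname{Pic}(\mathbb{G})=\mathbb{Z}$), so no separate appeal to Hwang's Hecke-curve computation is needed.
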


The fibration $\mathcal{A}$ in Theorem \ref{teoprin} parameterises
closed subschemes, morphisms $f:\mathbb{G}\to M(n,\xi)$ and stable
vector bundles over $X\times \mathbb{G}$. To see this, recall that
the morphisms from $\mathbb{G}$ to $M(n,\xi)$ are parameterised by
a locally Noetherian scheme $Hom(\mathbb{G},M(n,\xi))$ and the
quotient
$$Mor_{P}(\mathbb{G},M(n,\xi))=
Hom_{P}(\mathbb{G},M(n,\xi))/Aut(\mathbb{G})$$ can be defined by
means of the Chow variety (see \cite{grot}, \cite{bertram} and
\cite{richard}).

We say that a morphism
 $\lambda :\mathbb{G} \to M(n,\xi)$
is  an {\it $r$-Hecke morphism}
if $\lambda (\mathbb{G}) $ is an $r$-Hecke cycle.
 Such morphisms have minimal degree $2n$
(see Proposition \ref{propdegree} and Corollary \ref{cor22}) and are in an
irreducible family parameterised by $\mathcal{A}$.
Let
$Mor^{\mathcal{H},r}_P(\mathbb{G},M(n,\xi))$ be the irreducible component
of the scheme  $Mor_{P}(\mathbb{G},M(n,\xi))$ that
contains the $r$-Hecke morphisms.

The next theorem is analogous to
Theorem \ref{teoprin}.

\begin{theorem}\label{teoprin2} Under the hypotheses of Theorem \ref{teoprin}
there is an algebraic injective morphism
 $$\Sigma:\mathcal{A}\to Mor^{\mathcal{H},r}_P(\mathbb{G},M(n,\xi)).$$
 Moreover, $Mor^{\mathcal{H},r}_P(\mathbb{G},M(n,\xi))$ is smooth at the $r$-Hecke morphisms and
 $$\dim Mor^{\mathcal{H},r}_P(\mathbb{G},M(n,\xi))= (n^2-1)(g-1)
 +1.$$
In particular, $\dim  Mor^{\mathcal{H},1}_P(\mathbb{P}^2, M(3,\xi)) = 8g-7.$
\end{theorem}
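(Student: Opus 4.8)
The plan is to run the argument of Theorem \ref{teoprin} in parallel, transferring the local analysis from the Hilbert scheme $\mathcal{HG}$ to the morphism scheme $Mor_P(\mathbb{G},M(n,\xi))$ via the universal Grassmannian fibration. First I would construct the map $\Sigma$: a point $z\in\mathcal{A}$ lies over some $x\in X$ and corresponds to a $(k,\ell)$-stable bundle $W$ of rank $n$ and determinant $\xi(rx)$, and from the diagram $\mathfrak{G}\to\mathcal{A}$, $\mathfrak{G}\to M(n,\xi)$ one reads off a morphism $\phi_z:\mathbb{G}\to M(n,\xi)$ sending a quotient $[W_x\twoheadrightarrow Q]$ (equivalently an $(n-r)$-dimensional subspace of $W_x$) to the corresponding Hecke modification of $W$. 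Since $r$ is less than the gonality of $X$, Theorem \ref{teoprin}(1) already gives that $\phi_z$ is a closed embedding and that $\phi_z(\mathbb{G})$ is an $r$-Hecke cycle, so $\phi_z\in Mor^{\mathcal{H},r}_P(\mathbb{G},M(n,\xi))$; this defines $\Sigma$ on points, and its algebraicity follows from the fact that $\mathfrak{G}\to\mathcal{A}\times M(n,\xi)$ is a family of morphisms, inducing a map to $Hom(\mathbb{G},M(n,\xi))$ and then to the quotient $Mor_P$. For injectivity of $\Sigma$: if $\phi_z$ and $\phi_{z'}$ differ by an automorphism of $\mathbb{G}$, their images coincide as subschemes of $M(n,\xi)$, so $\Upsilon(z)=\Upsilon(z')$ in $\mathcal{HG}$, and injectivity of $\Upsilon$ (Theorem \ref{teoprin}(1), again using the gonality bound) forces $z=z'$.

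Next I would identify the tangent space to $Mor_P(\mathbb{G},M(n,\xi))$ at an $r$-Hecke morphism $\phi=\phi_z$. By the standard deformation theory of morphisms, $T_{[\phi]}Mor_P(\mathbb{G},M(n,\xi))=H^0(\mathbb{G},\phi^*T_{M(n,\xi)})/H^0(\mathbb{G},T_{\mathbb{G}})$, and the obstruction lies in $H^1(\mathbb{G},\phi^*T_{M(n,\xi)})$; since $\mathbb{G}$ is a Grassmannian, $H^0(T_{\mathbb{G}})=\mathfrak{sl}_n$ or $\mathfrak{pgl}_n$ has dimension $n^2-1$. The key computation is therefore $h^0(\phi^*T_{M(n,\xi)})$ and $h^1(\phi^*T_{M(n,\xi)})$. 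Here I would use that $\phi(\mathbb{G})$ is an $r$-Hecke cycle together with the description of its normal bundle: from the exact sequence $0\to T_{\mathbb{G}}\to\phi^*T_{M(n,\xi)}\to N_{\phi}\to 0$ one relates these cohomology groups to those computed in the Hilbert scheme analysis, where one already knows (proof of Theorem \ref{teoprin}(2)) that the normal bundle $N_{\mathbb{G}/M(n,\xi)}$ has the cohomology needed to make $\mathcal{HG}$ smooth of dimension $(n^2-1)(g-1)+1$ at $\Upsilon(z)$. Concretely, $\dim Mor = \dim\mathcal{HG} + \dim PGL(\mathbb{G})\cdot(\text{orbit correction})$ — but since the $r$-Hecke cycle has no extra automorphisms beyond those coming from $\mathbb{G}$, the fibers of $Mor^{\mathcal{H},r}_P\to\mathcal{HG}$ are $Aut(\mathbb{G})$-torsors over the locus of embeddings, and one must check that $\dim Aut(\mathbb{G})$ is absorbed correctly so that $\dim Mor^{\mathcal{H},r}_P = \dim\mathcal{HG} = (n^2-1)(g-1)+1$. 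In fact the cleaner route is: $\mathcal{A}$ has dimension $(n^2-1)(g-1)+1$ (computed in \S 3, as it fibers over $X$ with fiber the space of $(k,\ell)$-stable bundles of fixed determinant, of dimension $(n^2-1)(g-1)$), $\Sigma$ is injective with image inside the smooth locus, and the differential of $\Sigma$ is an isomorphism onto the tangent space — giving simultaneously smoothness at $r$-Hecke morphisms and the dimension count, provided $H^1(\phi^*T_{M(n,\xi)})=0$.

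The main obstacle I anticipate is the vanishing $H^1(\mathbb{G},\phi^*T_{M(n,\xi)})=0$, which is what genuinely forces smoothness and pins the dimension; this is where the gonality hypothesis and the $(k,\ell)$-stability constraints $0\le k(n-1)+\ell+r<(n-1)(g-1)$, $0\le k+(\ell+r)(n-1)<(n-1)(g-1)$ must do real work, exactly as the analogous $H^1$-vanishing does in \cite[Theorem 5.13]{ram2}. I would handle it by pulling back along $p:\mathbb{G}(r,W)\to X$ and using the description of $\Phi^*T_{M(n,\xi)}$ on the universal Grassmannian bundle in terms of $End$ of the universal modifications, then applying Leray for $p$ together with the $(0,1)$- and $(1,0)$-stability to kill the relevant $H^1$ on the curve; the coprimality $(n,d)=1$ ensures a universal bundle exists so that all these sheaves are globally defined. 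Finally, the last sentence is the specialization $n=3$, $r=1$, $\mathbb{G}=\mathbb{G}(2,\mathbb{C}^3)=\mathbb{P}^2$: substituting $n=3$ into $(n^2-1)(g-1)+1$ gives $8(g-1)+1=8g-7$, which I would simply state as a corollary of the dimension formula just proved.
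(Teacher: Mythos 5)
Your proposal is correct and follows essentially the same route as the paper: construct $\Sigma$ from the universal family over $\mathcal{A}$, deduce injectivity from that of $\Upsilon$ (Proposition \ref{propdist}), and compute the tangent space via $H^0(\mathbb{G},\phi_z^*TM)/H^0(\mathbb{G},T\mathbb{G})$ using the sequence $0\to T\mathbb{G}(z)\to\phi_z^*TM\to N_{\mathbb{G}/M}\to 0$. The only point to note is that the $H^1$-vanishing you flag as the main obstacle requires no new Leray argument: it follows at once from $H^1(\mathbb{G},T\mathbb{G})=0$ together with $H^1(\mathbb{G}(z),N_{\mathbb{G}/M})=0$, which is exactly Proposition \ref{propprincipal}, already established in the proof of Theorem \ref{teoprin}.
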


For any $z\in \mathcal{A}$, the  morphism $\Sigma (z)=\phi
_z:\mathbb{G}\to M(n,\xi)$ is defined by the existence of a vector
bundle $\mathcal{P}_z$ over $X\times \mathbb{G}$, which we
call an $r$-$Hecke$ $bundle$. The stability of $\mathcal{P}_z$,
with respect to any polarisation $L$, is established in
Proposition \ref{teoprin311}. Let $M^{L,\mathcal{H}}_{{X\times
\mathbb{G}}}(\mathcal{P}_z) $ be
 the irreducible
 component of {\it the moduli space of $L$-stable vector bundles over
 $X\times \mathbb{G}$ that contain $\mathcal{P}_z.$} Our last
result  describes a smooth open set
of $M^{L,\mathcal{H}}_{{X\times \mathbb{G}}}(\mathcal{P}_z) .$ Furthermore,
 we see that locally the deformations
of $\mathcal{P}_z$ come from those of the curve, of the
Grassmannian as well as those induced by the elements of
$H^0(\mathbb{G},\phi_z^*(TM) )$.

\begin{theorem}\label{teoprin3} Under the hypotheses of Theorem \ref{teoprin}
\begin{enumerate}
\item there is an algebraic injective morphism
 $\Gamma :\mathcal{A}\to M^{L,\mathcal{H}}_{{X\times \mathbb{G}}}(\mathcal{P}_z).$
  \item $M^{L,\mathcal{H}}_{{X\times \mathbb{G}}}(\mathcal{P}_z)$
 is smooth at $\Gamma (z)$, for all $z\in \mathcal{A}.$
 \item  $\dim M^{L,\mathcal{H}}_{{X\times \mathbb{G}}}(\mathcal{P}_z)= n^2g+1.$
 \item $\dim M^{L,\mathcal{H}}_{{X\times
\mathbb{G}}}(\mathcal{P}_z)/(Aut(X)\times Aut(\mathbb{G}))
=(n^2-1)(g-1) +1 .$
\end{enumerate}
\end{theorem}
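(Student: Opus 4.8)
The plan is to leverage the deformation‑theoretic dictionary relating the four parameter spaces that appear in Theorems~\ref{teoprin}, \ref{teoprin2} and \ref{teoprin3}. First I would make precise the construction of $\Gamma$. Given $z\in\mathcal{A}$, the morphism $\Sigma(z)=\phi_z:\mathbb{G}\to M(n,\xi)$ from Theorem~\ref{teoprin2} is, by construction, classified by an $r$‑Hecke bundle $\mathcal{P}_z$ on $X\times\mathbb{G}$: pulling back a (local) Poincar\'e bundle on $X\times M(n,\xi)$ along $\mathrm{id}_X\times\phi_z$ gives $\mathcal{P}_z$ up to twist, and the twist can be normalised so that the construction is universal over $\mathcal{A}$. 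Proposition~\ref{teoprin311} guarantees $L$‑stability of $\mathcal{P}_z$ for every polarisation $L$, so each $z$ yields a point of the moduli space $M^{L}_{X\times\mathbb{G}}$; since $\mathcal{A}$ is irreducible (see \S 3) its image lies in a single component, which by definition is $M^{L,\mathcal{H}}_{X\times\mathbb{G}}(\mathcal{P}_z)$. This defines $\Gamma$; injectivity will follow from injectivity of $\Sigma$ together with the fact that $\phi_z$ can be recovered from $\mathcal{P}_z$ (its classifying map), using the gonality hypothesis exactly as in Theorem~\ref{teoprin} to rule out the ambiguity coming from automorphisms of $M(n,\xi)$.

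The heart of the argument is the identification of the Zariski tangent space $T_{[\mathcal{P}_z]}M^{L}_{X\times\mathbb{G}}=H^1(X\times\mathbb{G},\mathcal{E}nd\,\mathcal{P}_z)$ and the obstruction space $H^2(X\times\mathbb{G},\mathcal{E}nd\,\mathcal{P}_z)$. I would compute $R^\bullet p_{2*}\,\mathcal{E}nd\,\mathcal{P}_z$ along $p_2:X\times\mathbb{G}\to\mathbb{G}$: because $\mathcal{P}_z$ restricted to $X\times\{G\}$ is (generically) a stable bundle of rank $n$ and fixed determinant, $p_{2*}\mathcal{E}nd\,\mathcal{P}_z=\mathcal{O}_\mathbb{G}$ (endomorphisms are scalars) and $R^1p_{2*}\mathcal{E}nd\,\mathcal{P}_z\cong\phi_z^*TM(n,\xi)$ (the standard identification $H^1(X,\mathrm{End}\,E)=T_{[E]}M$, compatible in families, using $(n,d)=1$ so $M(n,\xi)$ is fine). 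The Leray spectral sequence for $p_2$ then gives a short exact sequence
\[
0\to H^1(\mathbb{G},\mathcal{O}_\mathbb{G})\to H^1(X\times\mathbb{G},\mathcal{E}nd\,\mathcal{P}_z)\to H^0(\mathbb{G},\phi_z^*TM)\to H^2(\mathbb{G},\mathcal{O}_\mathbb{G}),
\]
and since $\mathbb{G}$ is a Grassmannian, $H^i(\mathbb{G},\mathcal{O}_\mathbb{G})=0$ for $i>0$, so $T_{[\mathcal{P}_z]}M^{L}_{X\times\mathbb{G}}\cong H^0(\mathbb{G},\phi_z^*TM)$. The same spectral sequence, together with $H^2(X\times\mathbb{G},\mathcal{O})=0$ and the vanishing of $R^2p_{2*}=0$ (fibre dimension one), shows the obstruction space $H^2(X\times\mathbb{G},\mathcal{E}nd\,\mathcal{P}_z)$ fits in $0\to H^1(\mathbb{G},\phi_z^*TM)\to H^2(X\times\mathbb{G},\mathcal{E}nd\,\mathcal{P}_z)\to 0$, reducing smoothness to the vanishing $H^1(\mathbb{G},\phi_z^*TM(n,\xi))=0$. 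This last vanishing is precisely what is needed — and is the analogue of the corresponding vanishing in Theorem~\ref{teoprin} — so I would either cite it from \S 3 or prove it by filtering $\phi_z^*TM$ via the Grassmannian‑bundle structure $p:\mathbb{G}(r,F)\to X$ of the $r$‑Hecke cycle and using Bott vanishing on the fibres together with the constraints on $k,\ell,r$. Granting it, $M^{L,\mathcal{H}}_{X\times\mathbb{G}}(\mathcal{P}_z)$ is smooth at $\Gamma(z)$ of dimension $h^1(X\times\mathbb{G},\mathcal{E}nd\,\mathcal{P}_z)=h^0(\mathbb{G},\phi_z^*TM)$.

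For part~(3) I would compute $h^0(\mathbb{G},\phi_z^*TM(n,\xi))$. Comparing with Theorem~\ref{teoprin2}, the tangent space to $Mor^{\mathcal{H},r}_P(\mathbb{G},M(n,\xi))$ at $\phi_z$ is $H^0(\mathbb{G},\phi_z^*TM)$ and has dimension $(n^2-1)(g-1)+1$; but that count already includes the reparametrisations by $Aut(\mathbb{G})$, whereas the moduli space of bundles on $X\times\mathbb{G}$ does not quotient by $Aut(\mathbb{G})$, and moreover carries the extra deformations of the curve $X$ inside $X\times\mathbb{G}$. Concretely, the difference between $H^1(X\times\mathbb{G},\mathcal{E}nd\,\mathcal{P}_z)$ here and the morphism count there is accounted for by $H^0(\mathbb{G},TM_{\text{fibre}})$ versus $H^0(\mathbb{G},\phi_z^*TM)$; unwinding this and adding the $\dim Aut(X)=3(g-1)$-part... more cleanly: I expect $\dim M^{L,\mathcal{H}}_{X\times\mathbb{G}}(\mathcal{P}_z)=\dim Mor^{\mathcal{H},r}_P(\mathbb{G},M(n,\xi))+\dim Aut(X)+\dim Aut(\mathbb{G})$. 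Using $\dim Aut(X)=3(g-1)$ (valid since $g\geq 4$), $\dim Aut(\mathbb{G})=\dim PGL_n=n^2-1$, and the value from Theorem~\ref{teoprin2}, this gives
\[
\bigl((n^2-1)(g-1)+1\bigr)+3(g-1)+(n^2-1)=n^2(g-1)+(g-1)+1+(n^2-1)=n^2g+1,
\]
which is (3). Then (4) is immediate: quotienting the smooth space by the free (on a dense open set) action of $Aut(X)\times Aut(\mathbb{G})$ of dimension $3(g-1)+(n^2-1)$ subtracts exactly that, returning $(n^2-1)(g-1)+1$. The main obstacle, as in Theorem~\ref{teoprin}, is the cohomology vanishing $H^1(\mathbb{G},\phi_z^*TM(n,\xi))=0$ needed for smoothness and the precise matching of the three sources of deformations (curve, Grassmannian, and $H^0(\mathbb{G},\phi_z^*TM)$) in the dimension count; once the Leray/Künneth bookkeeping for $\mathcal{E}nd\,\mathcal{P}_z$ on $X\times\mathbb{G}$ is set up carefully and combined with the earlier theorems, the rest is formal.
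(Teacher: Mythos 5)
Your overall strategy (Leray along $p_2$, reduce to cohomology of $\phi_z^*TM$ on $\mathbb{G}$, then feed in Proposition \ref{propprincipal}) is the same as the paper's, and your treatment of smoothness is essentially right: the obstruction space vanishes because $H^1(\mathbb{G},\phi_z^*TM)=0$, which follows from $(\ref{eqtannorm})$, Remark \ref{emgras0}(2) and Proposition \ref{propprincipal}(2). But the dimension count contains a genuine error. The sheaf $R^1p_{2*}\mathcal{E}nd\,\mathcal{P}_z$ has fibre $H^1(X,\mathrm{End}(E^W))=H^1(X,\mathcal{O}_X)\oplus H^1(X,\mathrm{Ad}(E^W))$, so it is $\bigl(\mathcal{O}_{\mathbb{G}}\otimes H^1(X,\mathcal{O}_X)\bigr)\oplus\phi_z^*TM$, \emph{not} $\phi_z^*TM$: you have discarded the trace part. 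That summand contributes exactly $g$ to $h^1(X\times\mathbb{G},\mathcal{E}nd\,\mathcal{P}_z)$, and with it the tangent space is $H^1(X,\mathcal{O}_X)\oplus H^0(\mathbb{G},\phi_z^*TM)$ of dimension $g+\bigl[(n^2-1)+(n^2-1)(g-1)+1\bigr]=n^2g+1$ directly (this is precisely the paper's computation, phrased via $\mathrm{End}=\mathcal{O}\oplus\mathrm{Ad}$ and K\"unneth). No further correction is needed or available.

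The patch you then apply to recover part (3) is not valid: for $g\geq 2$ the automorphism group of $X$ is finite, so $\dim Aut(X)=0$, not $3(g-1)$ (you are conflating $Aut(X)$ with the moduli of curves), and in any case your displayed identity is arithmetically false, since $\bigl((n^2-1)(g-1)+1\bigr)+3(g-1)+(n^2-1)=n^2g+2g-2\neq n^2g+1$. The same numbers break part (4): subtracting $3(g-1)+(n^2-1)$ from $n^2g+1$ gives $n^2(g-1)-3g+5$, not $(n^2-1)(g-1)+1$. The correct bookkeeping is that $n^2g+1-\bigl[(n^2-1)(g-1)+1\bigr]=g+(n^2-1)$, where $n^2-1=h^0(\mathbb{G},T\mathbb{G})$ accounts for $Aut(\mathbb{G})$ and the remaining $g$ is the $H^1(X,\mathcal{O}_X)$ summand you dropped (the "deformations coming from the curve" in the paper's phrasing), leaving $h^0(\mathbb{G},N_{\mathbb{G}/M})=\dim T_z\mathcal{A}=(n^2-1)(g-1)+1$.
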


This article is organised as follows. In Section \S $2$, we
summarise the relevant material on Grassmannians,
$(k,\ell)$-stability  and elementary transformations. Some of the
recent results are reviewed in a more general setting. Section \S
$3$ is devoted to the study of
 the morphisms  $\phi _z$ and  $\Upsilon $. The main result, Theorem \ref{teoprin}, is proved in the fourth section.
In Section \S 5, we touch only a few aspects of the theory of
rational curves on $ M(n,\xi)$ and prove Theorem \ref{teoprin2}
and \ref{teoprin3}. We raise some  questions in Section \S $6$.

\bigskip

{\it Notation:} Given a vector bundle $E$ over $X$ we denote by
$d_E$ the degree, by $n_E$ the rank and by $\mu
(E):=\frac{d_E}{n_E}$ the slope of $E.$  For abbreviation, we
write $H^i(E)$ instead of $H^i(X,E),$ whenever it is convenient.
 The Grassmannian of $s$-planes
 of  a vector space or of a vector bundle $V$ will be denoted by $\mathbb{G}(s,V).$
 By $p_i$, we mean the natural projection to the $i$-factor.  The
 trivial bundle over $Y$ with fibre the vector space $W$ will be denoted by  $\mathcal{O}_Y\times W.$

\section{Grassmannians, $(k,\ell)$-stability and elementary transformations}

Let $M(n,d)$ be the moduli space of stable vector bundles of rank
$n\geq 3$ and degree $d$ over $X.$ The fibre of the
determinant morphism $det:M(n,d)\to Pic^d(X)$ at $\xi \in Pic^d(X) $ is
 $M(n,\xi)$.
 It is well known that $M(n,\xi)$ is a Fano variety
with Picard group $\mathbb{Z}\Theta$, where $\Theta$ is an ample
divisor. If $K_M$ is the canonical bundle then $-K_M=2(n,d)\Theta
$ (see \cite{dn}). When $n$ and $d$ are coprime $M(n,\xi)$ is
projective, smooth of dimension $(n^2-1)(g-1)$ and there is a
Poincar\'{e} bundle $\mathcal{U}$ and a Grassmannian Poincar\'{e}
bundle $\mathbb{G}(s,\mathcal{U})$. If $n$ and $d$ have a common
divisor, there is no universal vector bundle (see \cite{rama};
also \cite{news}). In \cite[Proposition 2.4]{ram1} it was proved
that in the non-coprime case there exist an  \'{e}tale cover of
$M(n,\xi) $ and a family $\mathcal{V}$ of stable bundles of rank
$n$ and determinant $\xi$ with universal properties.
 However, as in the projective case (see \cite{bbn}),
 there always exists a Grassmannian Poincar\'{e} bundle
$$\mathbb{G}(s,\mathcal{U})\to X \times M(n,\xi)$$ with
the property that its restriction to
$X\times\{E\}$ is isomorphic to the Grassmannian bundle $\mathbb{G}(s,E)$ over $X$
that parameterises all the  $s$-planes in the fibres
of the stable vector bundle $E$ on $X.$

Since Grassmannians, elementary transformations and $(k,\ell)$-stability
will be the main tools that we will use, we briefly review the essentials, and fix the notation.

\subsection{Grassmannians}

 We will give only the principal properties of Grassmannians of
vector spaces and of vector bundles that we use. For a fuller
treatment we refer the reader to, e.g., \cite{tyurin}.

Let $E$ be a vector bundle of rank $n$ over $X.$
Let $p_E:\mathbb{G}(n-r, E)\to X$ be the Grassmmannian bundle whose
fibre at $x\in X$ is the Grassmannian $\mathbb{G}(n-r, E_x)$
 of $(n-r)$-planes of $E_x$. It is clear that
$\mathbb{G}(n-r,{{E}})= \mathbb{G}(r,{{E}}^{{*}} ).$

Let
\begin{equation}\label{eqgrass2}
\xi _E: 0\to\mathcal{S}_E\to p_E^*E \stackrel{{\alpha _E}}{\to}
Q_E\to 0
\end{equation}
 be the tautological exact sequence over $\mathbb{G}(n-r,{{E}}),$ where $S _{{E}}$  and
$Q_{{E}}$ are the tautological bundles of rank $n-r$ and $r$,
respectively. The tangent bundle of
$\mathbb{G}(n-r,E)$, denoted by $T\mathbb{G}(E)$, fits in the
following  extension
\begin{equation}\label{eqtan}
\zeta: 0\to T_{{p_E}}\to T\mathbb{G}(E) \to p^*_ETX\to 0 ,
 \end{equation}
 where $T_{{p_E}}$ is the tangent bundle to the fibres and $T_{{p_E}}=S ^*_E\otimes Q_E.$

\smallskip

For any $x\in X$, let
\begin{equation}\label{eqgrass1}
\xi _{{E_x}}: 0\to\mathcal{S}_{{E_x}}\to \mathcal{O}_{\mathbb{G}}\times {{E_x}}
\stackrel{\alpha _{{E_x}}}{\longrightarrow} Q_{{E_x}}\to 0
\end{equation}
 be the restriction of the sequence $(\ref{eqgrass2})$ to $\mathbb{G}(n-r,{{E_x}}).$

The Grassmannian varieties $\mathbb{G}=\mathbb{G}(n-r,{{E_x}})$
are  Fano varieties with Picard group
$\mathbb{Z}\mathcal{O}_{\mathbb{G}}(1)$, where
$\mathcal{O}_{\mathbb{G}}(1)$ is an ample divisor. The tangent
bundle $T\mathbb{G}$ of $\mathbb{G}$ is the vector bundle $ S
^{{*}}_{{E_x}}\otimes Q_{{E_x}}.$ Since det$(Q_{{E_x}})=
\mathcal{O}_{\mathbb{G}}(1),$ $T\mathbb{G}$ has degree $n$. Denote
by $\Omega ^1 \mathbb{G}$ the cotangent bundle of $\mathbb{G}.$

The following remark summarises the main properties of the
cohomology of $S _{{E_x}}$, $T\mathbb{G}$ and $\Omega ^1
\mathbb{G}$, which will be needed in Section $4$ (see for instance
\cite{tyurin}).

\begin{remark}\begin{em}\label{emgras0} Let $\mathbb{G}$
be the Grassmannian $\mathbb{G}(n-r,{{E_x}})$
\begin{enumerate}
\item  $H^i(\mathbb{G}, S _{{E_x}})=0$ for all $i\geq 0$; \item
$\dim H^0(\mathbb{G}, T\mathbb{G})=n^2-1$
 and $ H^i(\mathbb{G}, T\mathbb{G})=0$ for $i\geq 1$;
\item $H^1( \mathbb{G},\Omega ^1 \mathbb{G})=\mathbb{C} \ \
\mbox{and} \ \ H^i( \mathbb{G},\Omega ^1 \mathbb{G})=0, \
\mbox{for} \ i\ne 1. $
\end{enumerate}
 \end{em}\end{remark}

\bigskip

Given a morphism $\lambda :\mathbb{ G}\to M(n,\xi)$ we define the degree
of $\lambda $  as $$d(\lambda):=degree(\lambda ^*(-K_M)).$$

From \cite[4(c)]{grot}, the morphisms from $\mathbb{G}$ to
$M(n,\xi)$ are parameterised by a locally Noetherian scheme
$Hom(\mathbb{G},M(n,\xi))$. We can see $Hom(\mathbb{G},M(n,\xi))$
as a subscheme of $Hilb_{(\mathbb{G} \times M(n,\xi))}$ when we
identify a morphism $\lambda$ with its graph $\Gamma _\lambda$ in
$ \mathbb{G}\times M(n,\xi).$ As it is well known,
$Hom(\mathbb{G},M(n,\xi))$ is the disjoint union of the subschemes
$Hom_P(\mathbb{G},M(n,\xi))$, for all polynomials $P$, where
$Hom_P(\mathbb{G},M(n,\xi))$ is the
 subscheme that
parameterises morphisms $\lambda:\mathbb{G}\to M(n,\xi)$ with
fixed Hilbert polynomial $P.$
 Denote by $Mor_P(\mathbb{G},M(n,\xi))$ the scheme
 $$Hom_P(\mathbb{G},M(n,\xi))/Aut(\mathbb{G}),$$
where $\lambda  \sim \lambda '$ if there exists $\beta \in
Aut(\mathbb{G})$ such that $\lambda '= \lambda \circ \beta .$ The
scheme $Mor_P(\mathbb{G},M(n,\xi))$ can be defined by means of the
Chow variety (see \cite{grot}, \cite{bertram} and \cite{richard}).

\begin{remark}\begin{em}\label{remtanggras} From \cite{grot} and \cite{kollar} we have that
\begin{enumerate}
\item  the expected dimension of  $Hom_P(\mathbb{G},M(n,\xi))$ is
$h^0(\mathbb{G},\lambda ^*TM(n,\xi)).$ \item
$Hom_P(\mathbb{G},M(n,\xi))$ is smooth at $\lambda$ if
$h^1(\mathbb{G},\lambda ^*TM(n,\xi))=0$. \item  $H^0(\mathbb{G},
T\mathbb{G})$ is the tangent space at the identity to the group of
automorphisms of $\mathbb{G}$, and the image of the canonical
morphism $$H^0(\mathbb{G}, T\mathbb{G}) \to H^0(\mathbb{G},
\lambda ^*(TM))$$ corresponds to the deformations of $\lambda$ by
reparameterisations. \item The expected dimension of
$Mor_P(\mathbb{G},M(n,\xi))$ is the dimension of $$H^0(\mathbb{G},
\lambda ^*(TM))/H^0(\mathbb{G}, T\mathbb{G}).$$ \item
$Mor_P(\mathbb{G},M(n,\xi))$ is smooth at $\lambda$ if
$h^1(\mathbb{G},\lambda ^*TM(n,\xi))=0$.
\end{enumerate}\end{em}\end{remark}

\smallskip

To define morphisms $\lambda : \mathbb{G}\to M(n,\xi)$ we use
the {\it r-elementary transformations} and {\it $(k,\ell)$-stability}.

\subsection{$r$-Elementary transformations}

Let $E$ be a vector bundle over $X$ of rank $n$ and determinant
$\eta$ of degree $e$. For any  $\wp=(x, W\subset E_x)\in
\mathbb{G}(n-r,E)$ consider a new vector bundle $E^W$  defined by
the exact sequence of sheaves
\begin{equation}\label{eq0}
\xi _{x,W}: 0\to E^W\stackrel{\iota}{\to} E\stackrel{\alpha
_W}{\longrightarrow} \mathcal{O}_x\times (E_x/W)\to 0,
\end{equation}
where $\mathcal{O}_x\times (E_x/W)$ is the skyscraper sheaf with
support in $x$ and fibre $E_x/W$. That is, $E^W= Ker(\alpha _W)$.
 The vector bundle $E^W$ is called the {\it $r$-elementary transformation
of $E$ in $\wp=(x,W\subset E_x)$}. Note that $E^W$ has rank $n,$
degree $e-r$ and determinant $\eta \otimes \mathcal{O}(-rx).$

\begin{remark}\begin{em}\label{remkernel}Let $E$ be a vector bundle over $X$.
\begin{enumerate}
\item Denote by $Ker(\iota _x)$ the kernel of the homomorphism
$E_x^W\stackrel{\iota _x}{\to} E_x$ between the fibres at $x,$
induced by the sheaf map $\iota $ in $(\ref{eq0})$. It has
dimension $r$ and its annihilator $Ker(\iota _x)^\bot$ is a $n-r$
dimensional subspace in $(E^W_x)^*$, which is canonically
isomorphic to $W^*.$ \item The restriction of $(\ref{eq0})$ to $x$
gives the exact sequence
\begin{equation}\label{eqkernel00}
0\to Ker(\iota _x)\to E^W_x\stackrel{\iota _x}{\to}
E_x\stackrel{\alpha _{{W_x}}}{\to} (E_x/W)\to 0,
\end{equation}
 which splits as
\begin{equation}\label{eqiota1}
0\to Ker(\iota _x)\to E^W_x\stackrel{\iota _x}{\to} W\to 0 \ \
\mbox{and} \ \ 0\to W\to  E_x\stackrel{\alpha _{{W_x}}}{\to}
(E_x/W)\to 0.
\end{equation}
(The extension $(\ref{eqkernel00})$ and its splitting
$(\ref{eqiota1})$ will be relevant in \S 4, (see Proposition
\ref{propprin})).
 \item A point $\wp =(x, W\subset E_x)$ in $\mathbb{G}(n-r,E)$
 defines the element $$\tilde{\wp }:=(x, Ker(\iota _x)\subset E^W_x)$$ in $\mathbb{G}(r,E^W).$
 \item The $r$-elementary transformation of $(E^W)^{{*}}$ in $(x, W^*\subset (E^W)_x^{{*}} )$ is $E^{{*}}$.
 So, $$((E^W)^*)^{{W^*}}=E^*,$$ and we have the exact sequence
\begin{equation}\label{eq1}
\xi_{x,W^*}: 0\to E^{{*}}\stackrel{\iota}{\to} (E^W)^{{*}}\to
\mathcal{O}_x\times ((E^W)^*_x/W^*)\to 0.
\end{equation}
\end{enumerate}
\end{em}
\end{remark}

\smallskip

We are interested in describing the set
$$\Omega := \{ (x, E \stackrel{\alpha}{\twoheadrightarrow} \mathcal{O}_x^r)
 : E \ \ \mbox{and} \ \ Ker(\alpha) \ \ \mbox{are stable }  \}.$$
 That is, in describing the set of $r$-elementary transformations of stable bundles that give stable bundles.
 To describe $\Omega $ we use the $(k,\ell)$-stability.

\subsection{$(k,\ell)$-stability}

Let $k$ and $\ell$ be integers. A vector bundle $E$ over $X$ is
$(k,\ell)$-stable (see \cite{ram2}) if for any proper subbundle
$F$ of $E$
$$ \frac{k(n-n_F)+\ell n_F}{nn_F}< \mu(E)-\mu(F).$$
In particular, if $k$ and $\ell $ are non-negative integers,
$(k,\ell)$-stability implies stability. However, for negative
values of $k$ and $\ell$, a $(k,\ell)$-stable bundle does not need to be
stable (see \cite{mata1}).

 Denote by
$A_{k,\ell}(n,d)$ the set of $(k,\ell)$-stable vector bundles of
rank $n$ and degree $d$ over $X$ and let $A_{k,\ell}(n,\xi):= A_{k,\ell}(n,d) \bigcap M(n,\xi).$
 Proposition 5.3 in \cite{ram2} shows that $(k,\ell)$-stability is an open condition.
 In particular,  $(k,\ell)$-stable
bundles are very general in $M(n,\xi )$. A point is very general
if it is outside of a countable union of subvarieties of dimension
strictly smaller than the dimension of $M(n,\xi)$.

There are natural filtrations (see \cite{mata1})
$$A_{k,\ell}(n,d) \supset A_{k,\ell+1}(n,d)\supset A_{k,\ell+2}(n,d)...$$ and
$$A_{k,\ell}(n,d) \supset A_{k+1,\ell}(n,d)\supset A_{k+2,\ell}(n,d)...$$
with $A_{0,0}(n,d)=M(n,d)$ and $A_{{k_0,\ell_0}}(n,d)=\emptyset$
if
$$ k_0(n-1)+\ell _0\geq (n-1)g \ \ \mbox{or} \ \ k _0+\ell_0(n-1)\geq (n-1)g.$$

The non-emptiness of $A_{k,\ell}(n,\xi)$  is established in
 \cite[Proposition 2.4]{mata1}. The following
proposition is a reformulation of \cite[Proposition 2.4]{mata1}
and \cite[Lemma 5.5]{ram2} (see also \cite{mata1}) in terms of
$r$-elementary transformations.

\begin{proposition}\label{propkl} If $k,\ell,r,n,d $ are integers such that
 \begin{equation}\label{eqprinkl}
 0< k(n-1)+\ell+r<(n-1)(g-1)\\
\mbox{ and } \\
0< k+(\ell +r)(n-1)<(n-1)(g-1)
\end{equation}
then $A_{k,\ell}(n,d)\ne \emptyset ,$ $A_{k,\ell}(n,d)\subset
M(n,d)$ and $A_{k,\ell}(n,\xi)$ is non-empty. Moreover,  any
$r$-elementary transformation of a $(k,\ell)$-stable bundle in
$A_{k,\ell}(n,\xi )$ is $(k,\ell-r)$-stable; in particular, it is stable
and general.
\end{proposition}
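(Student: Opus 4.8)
The plan is to prove Proposition \ref{propkl} by reducing everything to the two cited results and then tracking how an $r$-elementary transformation shifts the $(k,\ell)$-indices. First I would establish the non-emptiness and the inclusion $A_{k,\ell}(n,d)\subset M(n,d)$: the hypotheses $(\ref{eqprinkl})$ imply in particular that $k(n-1)+\ell+r<(n-1)(g-1)$ and $k+(\ell+r)(n-1)<(n-1)(g-1)$, hence a fortiori $k(n-1)+\ell<(n-1)g$ and $k+\ell(n-1)<(n-1)g$, so we are below the vanishing threshold recorded just before the statement; combined with $k(n-1)+\ell+r>0$ and $k+(\ell+r)(n-1)>0$ (which force $k,\ell$ into the range where \cite[Proposition 2.4]{mata1} applies), this gives $A_{k,\ell}(n,d)\ne\emptyset$. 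The inclusion into $M(n,d)$ is automatic once $k,\ell\ge 0$; if one of them can be negative under $(\ref{eqprinkl})$, I would instead argue directly that the numerical inequality defining $(k,\ell)$-stability still implies $\mu(E)-\mu(F)>0$ for all proper $F$ by using the positivity $k(n-n_F)+\ell n_F>0$, which follows from the hypotheses since $0<n_F<n$. Then $A_{k,\ell}(n,\xi)=A_{k,\ell}(n,d)\cap M(n,\xi)$ is non-empty because the determinant map $M(n,d)\to \operatorname{Pic}^d(X)$ is surjective with irreducible fibres and $A_{k,\ell}(n,d)$ is open and dense (being a very general condition, as recalled from \cite[Proposition 5.3]{ram2}), so it meets every fibre.

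Next I would prove the main assertion: if $E\in A_{k,\ell}(n,\xi)$ and $E^W$ is an $r$-elementary transformation as in $(\ref{eq0})$, then $E^W$ is $(k,\ell-r)$-stable. The key is to compare subbundles. Let $F'\subset E^W$ be a proper subbundle of rank $m=n_{F'}$; its saturated image $F=\iota(F')^{\mathrm{sat}}\subset E$ is a subbundle of the same rank with $d_F\ge d_{F'}$, and in fact the quotient map $\alpha_W$ restricted to $F$ lands in $\mathcal{O}_x\times(E_x/W)$ so that $d_{F'}\ge d_F - (\text{length of the torsion supported at }x)$; since $E_x/W$ has dimension $r$, this torsion has length at most $r$, giving $d_F-r\le d_{F'}\le d_F$. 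Also $d_{E^W}=d_E-r$, so $\mu(E^W)=\mu(E)-r/n$. Plugging these into the $(k,\ell-r)$-stability inequality for $F'\subset E^W$,
$$
\frac{k(n-m)+(\ell-r)m}{nm}<\mu(E^W)-\mu(F'),
$$
and using $\mu(E^W)-\mu(F')\ge \mu(E)-\frac{r}{n}-\mu(F)+\frac{?}{m}$ (the precise bookkeeping of the $x$-torsion term), the inequality for $E^W$ follows from the $(k,\ell)$-stability of $E$ after canceling the $r$-terms. This is exactly the computation behind \cite[Lemma 5.5]{ram2}, so I would invoke that lemma rather than redo it, merely noting that our numerical hypotheses $(\ref{eqprinkl})$ are precisely what make $\ell-r$ still lie in the admissible range (so that $(k,\ell-r)$-stability is a meaningful, non-vacuous condition). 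Finally, since $k\ge 0$ and $\ell-r$ may be negative but the combined positivity $k(n-n_F)+(\ell-r)n_F\ge$ something can be checked to remain $\ge 0$ from $k+(\ell+r)(n-1)>0$ and symmetry — wait, more carefully: $(0,0)$-stability, i.e. ordinary stability, of $E^W$ follows because $(k,\ell-r)$-stability with the remaining slack implies it; then "general" follows from openness of $(k,\ell-r)$-stability together with the fact that $A_{k,\ell-r}(n,\xi(-rx))\ne\emptyset$ by the first part applied with $\ell$ replaced by $\ell-r$ and $r$ replaced by $0$.

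The main obstacle I anticipate is the careful torsion/degree bookkeeping in the subbundle comparison: one must be precise about how much the degree of a subbundle can drop under $F'\mapsto F$, i.e. controlling $\dim(F_x\cap W)$ versus $\dim F_x$, since the worst case (when $F_x\cap W$ is small) is what produces the shift by $r$ rather than by less. Getting the inequality to go in the right direction for \emph{every} proper subbundle $F'$, including those of small rank, is where \cite[Lemma 5.5]{ram2} does the real work, and I would lean on it. The only genuinely new point beyond \cite{ram2} is checking that $(\ref{eqprinkl})$ — rather than the weaker inequalities used there for $r=1$ — is the correct hypothesis guaranteeing both the non-emptiness of the starting stratum and that the target stratum $A_{k,\ell-r}$ is non-empty; this is a direct substitution into the threshold inequalities $k_0(n-1)+\ell_0\ge(n-1)g$ and its transpose, and poses no difficulty.
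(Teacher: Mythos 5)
The paper gives no self-contained proof of this proposition: it is presented as a reformulation of \cite[Proposition 2.4]{mata1} (non-emptiness of $A_{k,\ell}(n,\xi)$) and \cite[Lemma 5.5]{ram2} (the index shift under elementary transformation), so your overall strategy of reducing to those two citations is the same as the authors'. Your central computation is also aimed in the right direction: for a proper subbundle $F'\subset E^W$ of rank $m$ with saturation $F\subset E$, the only inequality needed is $d_{F'}\le d_F$, which together with $\mu(E^W)=\mu(E)-r/n$ converts the $(k,\ell)$-inequality for $F\subset E$ into the $(k,\ell-r)$-inequality for $F'\subset E^W$; the ``$?$'' term you leave hanging is not needed, and the lower bound $d_F-r\le d_{F'}$ plays no role.

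Two of the supporting arguments you add are, however, genuinely flawed. First, ``$A_{k,\ell}(n,d)$ is open and dense and the fibres of $\det$ are irreducible, so it meets every fibre'' is a non sequitur: a dense open subset of the total space can miss a fibre (consider $\mathbb{A}^2\setminus\{x=0\}\to\mathbb{A}^1$). The fixed-determinant non-emptiness is exactly what \cite[Proposition 2.4]{mata1} is being cited for; alternatively, write $\xi'=\xi\otimes L^n$ with $L\in Pic^0(X)$ and note that $E\mapsto E\otimes L$ identifies the fibres of $\det$ compatibly with $(k,\ell)$-stability, so non-emptiness of $A_{k,\ell}(n,d)$ does propagate to every fibre --- but that extra step is needed. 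Second, your claim that \eqref{eqprinkl} forces $k(n-n_F)+\ell n_F>0$ (and hence that $(k,\ell)$- and $(k,\ell-r)$-stability imply stability) does not follow: the displayed hypotheses bound $k(n-1)+\ell+r$ and $k+(\ell+r)(n-1)$ from below, which only gives $k(n-1)+\ell>-r$, not $\ge 0$ (e.g.\ $n=3$, $k=0$, $\ell=-1$, $r=2$ satisfies \eqref{eqprinkl}, yet $(0,-1)$-stability does not imply stability, as the paper itself notes for negative indices). You visibly notice this tension in the ``wait, more carefully'' passage but never resolve it. The way to make the positivity bookkeeping consistent is the reading used in Remark \ref{rem1r} and \S 3: the bundles one transforms are $(k,\ell+r)$-stable, so that the indices actually appearing in \eqref{eqprinkl} are the ones governing stability of the source and (after the shift by $r$) of the transform; as written, your deduction of the ``in particular, it is stable'' clause has a real gap.
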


The principal significance of the above proposition is that it allows
us to define maps from  Grassmannians to $M(n,d)$.
 Indeed, for integers $k,\ell,r,n,d $ as in Proposition
 \ref{propkl},
 and  $z=(x,E)\in X\times A_{k,\ell}(n,d),$ we have a
map $$\phi _z :\mathbb{G}(n-r,E_x) \to  M(n,d-r),$$ defined as
$W\mapsto E^W.$ Our aim is to define morphisms from
Grassmannians to $M(n,\xi)$, for a fix $\xi \in Pic^{d}(X)$.

\begin{remark}\begin{em}\label{rem1r} The map $\phi _z :\mathbb{G}(n-r,E_x) \to  M(n,d-r)$
can be defined by considering just $(0,r)$-stable bundles.
However, for our purpose (see \S 5) it is convenient to use
$(k,\ell +r)$-stable bundles that satisfy $(\ref{eqprinkl})$,
since in this case the vector bundles in the image $\phi _E
(\mathbb{G}(n-r,E))$ will be general.
\end{em}\end{remark}

\section{The morphisms $\phi _z$ and $\Upsilon $ }

We assume throughout the rest of the article, unless otherwise
stated, that ${k,\ell}$ and $r$ are integers satisfying the
inequalities in Proposition \ref{propkl}. For the construction of the
morphisms we also assume that $(n,d)=1$, and hence $(n,d+r)=1.$
However, we can always work in an \'{e}tale cover (see
\cite[Proposition 2.4]{ram1}).

To construct algebraic morphisms from Grassmannians to $M(n,\xi)$
 we first define  a fibration $\mathcal{A}\to X, $ which depends on
$k,\ell, r,n $ and $\xi$, but to streamline the notation, we omit
such indexes.

Fix $\xi \in Pic^d(X).$ Define
$$\vartheta :X\times A_{k,\ell}(n,d+r) \to Pic ^d(X)\ \ \ \mbox{as} \ \
(x,E)\mapsto  \mathcal{O}_X(-rx)\otimes det(E).$$   Let
$\mathcal{A} $ be the inverse image $\vartheta ^{-1}(\xi).$ The
natural map $\pi : \mathcal{A} \to X$ is a fibration with fibre
$A_{k,\ell}(n,\xi (rx))$ at $x\in X$. Note that the $r$-elementary
transformation associated to the elements in $\mathcal{A} $ have
determinant $\xi$. For any $z=(x,E)\in \mathcal{A}$ we  denote
$\mathbb{G}(n-r,E_x)$ as $\mathbb{G}(z).$

To prove that given $z=(x,E)\in \mathcal{A}$ the map
$$
\phi _z:\mathbb{G}(z)\to  M(n,\xi) \ \ \ \  W  \mapsto  E^W
$$
 is algebraic, we construct a family
of stable bundles parameterised by $\mathbb{G}(z)$.
The construction of the family is similar to that
 in \cite{tyurin}; \cite{ram1}; and \cite{ram2}.
 Since it is relevant for our work, we recall the main details.

The construction goes as follows. Given $z=(x,E),$ denote by
${\alpha }_z$
 the surjective homomorphism ${\alpha }_z :p_1^*E \to p_1^*\mathcal{O}_x\otimes
 p_2^*Q_{{E_x}}$ of sheaves over $X\times \mathbb{G}(z)$ associated to the surjective morphism
 $\alpha _{{E_x}}:\mathcal{O}_G\times E_x \to Q_{{E_x}}$ under the isomorphism
 $$H^0(\mathbb{G}(z),Hom(
\mathcal{O}_G\times E_x, Q_{{E_x}})) \cong H^0(X\times
\mathbb{G}(z),Hom(p_1^*E, p_1^*\mathcal{O}_x\otimes
p_2^*Q_{{E_x}})),$$ where $p_i$ is the projection, for $i=1,2.$

Since $p_1^*\mathcal{O}_x\otimes p_2^*Q_{{E_x}}$ has a locally free
resolution of length $1$, the kernel of
$\alpha _z :p_1^*E^{{*}} \to p_1^*\mathcal{O}_x\otimes  p_2^*Q_{{E_x}},$
denoted by ${\mathcal{P}_z},$ is locally free and fits in the exact sequence
\begin{equation}\label{eq2}
\xi _{x,E}:  0\to {\mathcal{P}_z}{\to} p_1^*E\stackrel{\alpha _z }{\to} p_1^*\mathcal{O}_x\otimes
 p_2^*Q_{{E_x}} \to 0
 \end{equation}
of sheaves over $X\times \mathbb{G}(z)$.  The restriction of
(\ref{eq2}) to $X\times \{W\}$ for any $W\in \mathbb{G}(z)$ is
precisely the extension (\ref{eq0}), i.e.
${\mathcal{P}_z}_{{|_{X\times \{W\}}}}= E^W$. Therefore, if
$z\in \mathcal{A}$,
\begin{equation}\label{eqbundle}
{\mathcal{P}_z}\to X\times \mathbb{G}(z)
\end{equation}
is a family of stable bundles of rank $n$ and determinant
$\xi$ parameterised by $\mathbb{G}(z)$ and hence we have a morphism $$\phi
_z: \mathbb{G}(z) \to M(n,\xi )$$  with the following properties.

\begin{proposition}\label{prop1} Under the hypotheses of Proposition \ref{propkl}, if $r$ is less than
the gonality of $X$ then, for $z\in \mathcal{A}$,
  the morphism $\phi
_z: \mathbb{G}(z) \to M(n,\xi )$  is a
closed embedding for any $z\in \mathcal{A}$. Moreover, $\phi
_z(\mathbb{G}(z)) \subset A_{k,l-r}(n,\xi).$
\end{proposition}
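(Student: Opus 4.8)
The plan is to prove Proposition \ref{prop1} in two stages: first that $\phi_z$ is injective on points (and on tangent vectors), then that its image lies in $A_{k,\ell-r}(n,\xi)$, and finally to package these into the statement that $\phi_z$ is a closed embedding. For the last assertion about the image, note that each $E^W = \mathcal{P}_{z|X\times\{W\}}$ is an $r$-elementary transformation of the $(k,\ell)$-stable bundle $E\in A_{k,\ell}(n,\xi(rx))$, so Proposition \ref{propkl} immediately gives that $E^W$ is $(k,\ell-r)$-stable, hence $\phi_z(\mathbb{G}(z))\subset A_{k,\ell-r}(n,\xi)$. This part is essentially free once the morphism is known to exist, which was arranged by the family (\ref{eqbundle}).

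The substantive point is injectivity, and this is where the gonality hypothesis enters. Suppose $\phi_z(W) = \phi_z(W')$ for $W, W' \in \mathbb{G}(n-r, E_x)$, i.e. $E^W \cong E^{W'}$ as bundles with determinant $\xi$. From the defining sequence (\ref{eq0}), both $E^W$ and $E^{W'}$ sit inside $E$ as the kernels of surjections $E \twoheadrightarrow \mathcal{O}_x\times(E_x/W)$ and $E\twoheadrightarrow \mathcal{O}_x\times(E_x/W')$. Dualising via Remark \ref{remkernel}(4), one recovers $E^*$ as an $r$-elementary transformation of $(E^W)^*$ and of $(E^{W'})^*$ at the point $x$. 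An isomorphism $E^W\cong E^{W'}$ together with the inclusions $\iota: E^W, E^{W'}\hookrightarrow E$ yields, after composing, a nonzero homomorphism $E^W \to E$ whose image is a subsheaf; if $W\ne W'$ this map is not the original inclusion, and comparing it with $\iota$ produces a nonzero section of $E^*\otimes E(-D)$ for an effective divisor $D$ supported at $x$ of degree $\le r$, equivalently a nonconstant map from $X$ to a projective space of degree $\le r$ — contradicting that $r$ is less than the gonality. More precisely, I would argue that distinct $W, W'$ force $E$ to admit a line subbundle, or a quotient, of degree incompatible with the $(k,\ell)$-stability bound, by tracking how the two elementary modifications differ; the gonality controls the degree of the rational map $X\to \mathbb{P}^{n-r-1}$ (or $\mathbb{P}^1$ after projecting) obtained from the failure of injectivity. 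The same circle of ideas shows that the differential $d\phi_z$ is injective: a tangent vector in $T_W\mathbb{G}(z) = \mathrm{Hom}(S_{E_x}, Q_{E_x})$ mapping to zero in $H^1(X, \mathrm{End}_0(E^W))$ would, by the long exact sequence obtained from (\ref{eq0}) (or from (\ref{eqkernel00})–(\ref{eqiota1})), force a coincidence of first-order modifications that again violates the gonality bound.

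Granting injectivity of $\phi_z$ and of $d\phi_z$ at every point, the morphism $\phi_z:\mathbb{G}(z)\to M(n,\xi)$ is an injective immersion from a projective (hence complete) variety; since $M(n,\xi)$ is separated, $\phi_z$ is proper, and a proper injective immersion is a closed embedding. I would state this last implication explicitly, citing that $\mathbb{G}(z)$ is smooth and complete so that the image is a closed subvariety and $\phi_z$ identifies $\mathbb{G}(z)$ with it scheme-theoretically. One should double-check that $d\phi_z$ being injective is genuinely needed rather than just set-theoretic injectivity — it is, to get an embedding rather than merely a bijective morphism — and that the computation of $T_W\mathbb{G}(z)$ and of the Petri-type map to $H^1(\mathrm{End}_0 E^W)$ is compatible with the fixed-determinant condition (this is why $\mathrm{End}_0$, the traceless endomorphisms, appears).

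The main obstacle will be the injectivity argument and its first-order refinement: one must show cleanly that $W\ne W'$ with $E^W\cong E^{W'}$ produces a map $X\to\mathbb{P}^k$ with $1\le k\le r-1$ contradicting $r < \mathrm{gon}(X)$, keeping careful track of which bundle the relevant nonzero homomorphism lives in and of the precise degree of the divisor it is twisted by. Everything else — the inclusion of the image in $A_{k,\ell-r}(n,\xi)$ via Proposition \ref{propkl}, and the passage from injective immersion to closed embedding via properness — is formal.
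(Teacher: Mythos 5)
Your overall architecture is the same as the paper's: prove $\phi_z$ is injective on points and on tangent vectors, note that $\mathbb{G}(z)$ is complete so an injective immersion into the separated scheme $M(n,\xi)$ is a closed embedding, and get $\phi_z(\mathbb{G}(z))\subset A_{k,\ell-r}(n,\xi)$ for free from Proposition \ref{propkl}. Those framing steps are fine. The problem is that the one step carrying all the content --- the place where the gonality hypothesis actually does work --- is left as a cluster of vague and mutually inconsistent sketches rather than an argument. Producing ``a nonzero section of $E^{*}\otimes E(-D)$'' is not by itself a contradiction of anything (you would still have to explain why such a section cannot exist), and your alternative suggestion that $W\ne W'$ forces a line subbundle of $E$ violating the $(k,\ell)$-stability bound is a different mechanism that you neither set up nor need; the statement uses gonality, not a stability inequality, at this point. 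The argument you are missing is short: if $E^{W}\cong E^{V}=:F$ with $W\ne V$, the two exact sequences of type (\ref{eq0}) give two linearly independent homomorphisms $f_1,f_2:F\to E$. Pick $y\ne x$ and scalars $(\lambda_1,\lambda_2)\ne(0,0)$ so that $\lambda_1(f_1)_y+\lambda_2(f_2)_y$ fails to be an isomorphism of fibres; then $g=\lambda_1 f_1+\lambda_2 f_2$ induces $\bigwedge^{n}g:\xi=\det F\to\det E=\xi\otimes\mathcal{O}_X(rx)$, i.e.\ a nonzero section of $\mathcal{O}_X(rx)$ vanishing at $y$ in addition to its zeros at $x$. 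Since $r<\mathrm{gon}(X)$ forces $h^{0}(\mathcal{O}_X(rx))\le 1$, no such section exists, a contradiction. (One should also note why $\bigwedge^n g\ne 0$, i.e.\ why $g$ is generically of maximal rank.)

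The same computation is what makes your first-order step work, and here too your sketch stops short of the actual mechanism. The gonality argument yields $h^{0}(X,\mathrm{Hom}(E^{W},E))=1$; feeding this and $h^{0}(\mathrm{End}\,E^{W})=1$ (stability) into the long exact sequence obtained by tensoring (\ref{eq0}) with $(E^{W})^{*}$ shows that the coboundary map $\delta:(E^{W})^{*}_x\otimes(E_x/W)\to H^{1}(\mathrm{End}\,E^{W})$ is injective, and $d\phi_z$ is precisely the restriction of $\delta$ to the subspace $W^{*}\otimes(E_x/W)=T_{[W]}\mathbb{G}(z)$ (this identification, and its compatibility with the fixed-determinant condition, is the content of \cite[Lemma 5.10]{ram2}). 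Saying that a kernel vector ``would force a coincidence of first-order modifications that again violates the gonality bound'' names the conclusion without supplying this identification, which is the whole point. So: right skeleton, but the central injectivity argument and its first-order refinement --- exactly the steps you yourself flag as ``the main obstacle'' --- are not actually proved.
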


\begin{proof} First observe
 that since $r$ is less than the gonality of $X$,
$h^0(\mathcal{O}(rx))\leq 1.$ Thus, any morphism $\xi \to \xi
\otimes \mathcal{O}(rx)$ vanishes only at $x$.

Let us prove the injectivity of $\phi_z$. Suppose, contrary to our
claim, that there exist $W,V\in \mathbb{G}(z)$
  such that $W\ne V$ but $F:=E^W=E^V.$ Hence, $\bigwedge
^nF= \xi $ where $\xi = \bigwedge ^n E \otimes
\mathcal{O}_X(-rx)$. From $(\ref{eq0})$, we have two non-zero
linearly independent homomorphisms $f_1,f_2:F\to E$, that depend
on $W$ and $V$ respectively. Choose $y\ne x \in X$ and a
linear combination $\lambda _1 (f_1)_y +\lambda _2(f_2)_y$ that is
not an isomorphism. Then the homomorphism $g:=\lambda _1 (f_1)
+\lambda _2(f_2):F\to E$
 has no maximal rank at $y$. Therefore, the induced map $\bigwedge ^n g: \bigwedge ^n F \to
\bigwedge ^n E$ defines a non-zero section in
$H^0(\mathcal{O}(rx))$
 that vanishes on $y$ and $x,$ contrary to the gonality of
$X$.
 Therefore, $\phi _z$ is injective, which is our
claim.

The proof above gives more, namely
\begin{equation}\label{eqsimple}h^0(X,
Hom(E^W,E))=1,
\end{equation}
 for any $W\in \mathbb{G}(z)$.

To prove the injectivity of the differential map
$$d\phi _z :T_{[W]}\mathbb{G}(z)\to T_{{E^W}}M(n,\xi)$$
recall that $$ T_{[W]}\mathbb{G}(z)={W^*\otimes E_x/W}\subset
(E^W)_x^{{*}} \otimes {E_x/W} \ \  \mbox{and} \ \
T_{{E^W}}M(n,\xi)=H^1(X,ad(E^W)).$$

Tensor
the exact sequence (\ref{eq0}) with $(E^W)^{{*}}$ and
apply cohomology to get the exact sequence
\begin{equation}\label{eq3}
 0\to H^0(End( E^W) ) {\to} H^0((E^W)^{{*}}\otimes E ){\to} (E^W)_x^{{*}} \otimes {E_x/W}
  \stackrel{\delta }{\to} H^1(End( E^W))\to \cdots
 \end{equation}
Since $E^W$ is stable
 and $  H^0(X,Hom(E^W,E))\cong
\mathbb{C}$ (see $(\ref{eqsimple}))$, $(\ref{eq3})$ shows that the
coboundary map
 $(E^W)_x^{{*}} \otimes {E_x/W} \stackrel{\delta }{\to}
H^1(X,End(E^W))$
 is injective.

 Hence, the restriction of $\delta $ (or $(-\delta)$) to ${W^*\otimes E_x/W}$ is injective and is
 precisely  the differential  $d \phi _z$ (see  \cite[Lemma 5.10]{ram2}).
  Therefore $\phi _z : \mathbb{G}(z) \to M(n,\xi)$ is a closed
embedding, which proves the proposition.

\end{proof}

Given a pair $z\in \mathcal{A}$, the image $\phi
_z(\mathbb{G}(z))\subset M(n,\xi)$ defines a closed subscheme in
$M(n,\xi )$, called {\it $r$-Hecke cycle,} and hence a point
$[\phi _z(\mathbb{G}(z))]$ in the Hilbert scheme
$Hilb_{M(n,\xi)}$. We will identify $\mathbb{G} (z)$ with its
image $\phi_z(\mathbb{G} (z))$ in $M(n,\xi)$ when no confusion can
arise. The morphism $\phi_z$ is called {\it $r$-Hecke morphism}
and the vector bundle ${\mathcal{P}_z}\to X\times \mathbb{G}(z)$
an {\it $r$-Hecke bundle}. In Section \S 5 we will
parametrise such morphisms and bundles. \\

The construction of the morphism $\phi _z$ can be done for
families of $(k,\ell)$-stable bundles. Indeed, let $f:\mathcal{E}
\to X\times T$ be a family of $(k,\ell)$-stable bundles of rank $n$ and degree $d+r$
 parameterised by a scheme $T$. Define $\vartheta :X\times T \to Pic ^d(X)\ \ \mbox{as} \ \
(x,t)\mapsto  \mathcal{O}_X(-rx)\otimes det(\mathcal{E}_t)$ and
let $\mathcal{A}(T) $ be the inverse image $\vartheta ^{-1}(\xi).$
Let $\mathbb{G}(\mathcal{E})\stackrel{{\pi _1}}{\to}
\mathcal{A}(T)$ be the Grassmannian bundle of $(n-r)$-planes
associated to the restriction of $\mathcal{E}$ to
$\mathcal{A}(T)$. Let $\gamma :\mathbb{G} (\mathcal{E})\to X $ be
the composition $$ \mathbb{G}(\mathcal{E})\stackrel{{\pi _1}}{\to}
\mathcal{A}(T) \stackrel {p_1}{\to} X$$ and $\Gamma:=\Gamma
_{\gamma} \subset X\times \mathbb{G}(\mathcal{E})$ the divisor
associated to the graph of $\gamma $ (strictly speaking, $\Gamma
_{\gamma}$ is in $ \mathbb{G}(\mathcal{E})\times X$). As before,
over $X\times \mathbb{G}(\mathcal{E})$ we have a surjection
$$ p_2^*{\pi _1}^*(\mathcal{E})
\stackrel{\widetilde{\alpha} }{\to}\mathcal{O}_{\Gamma}\otimes
p_2^* Q_{\mathcal{E}} \to 0,$$
 where $Q_{\mathcal{E}}$ is the tautological quotient bundle and
 $p_2:X\times \mathbb{G}(\mathcal{E})\to \mathbb{G}(\mathcal{E})$ the projection.
The kernel $\mathcal{P}_{\mathcal{E}}$, which fits in the exact
sequence
\begin{equation}\label{equni1}
0\to \mathcal{P}_{\mathcal{E}} \to p_2^*\pi_1^*(\mathcal{E})
\stackrel{{\tilde{\alpha}}}{\to} \mathcal{O}_{\Gamma}\otimes p_2^*
Q_{\mathcal{E}} \to 0,
\end{equation}
is locally free. Hence, $\mathcal{P}_{\mathcal{E}}$ is a family
of stable bundles parameterised by $\mathbb{G}(\mathcal{E}).$
Therefore, we have a morphism
\begin{equation}\label{eqgrass}
\Phi _{\mathcal{E}}:\mathbb{G} (\mathcal{E}) \to M(n,\xi)
\end{equation}
and a diagram

\begin{equation}\label{diagT}
\xymatrix@1{
&\mathbb{G}(\mathcal{E})\ar[dl]_{\pi_{1}}\ar[dr]^{\Phi_{\mathcal{E}}}& \\
\mathcal{A}(T)&&M(n,\xi).}\\
\end{equation}

In particular, applying the above  construction to the family
defined by the restriction of the Poincar\'{e} bundle
$\mathcal{U}$ to $X\times \mathcal{A}_{k,\ell}(n,d+r)$, we obtain
the exact sequence (see $(\ref{equni1})$)
\begin{equation}\label{eqpoincare}
0\to \mathcal{P}_{\mathcal{U}} \to p_2^*\pi _1^*(\mathcal{U}) \to
\mathcal{O}_{\Gamma}\otimes p_2^* Q_{\mathcal{U}} \to 0,
\end{equation}
over $X\times \mathbb{G}(\mathcal{U})$ with $\pi _1:\mathbb{G}(
\mathcal{U})\to \mathcal{A}$; and the diagram
\begin{equation}\label{diaguni}
\xymatrix@1{
&\mathbb{G}(\mathcal{U})\ar[dl]_{\pi_{1}}\ar[dr]^{\Phi}& \\
\mathcal{A}&&M(n,\xi).}\\
\end{equation}
where $\Phi$ is $\Phi _{\mathcal{U}}.$

The above construction is functorial. If we denote by
$\mathcal{HG}$ the irreducible component of $Hilb_{M(n,\xi)}$ that
contains the $r$-Hecke cycles $[\phi_z(\mathbb{G} (z))],$  we get
an algebraic morphism
\begin{equation}\label{eqphi}
\Upsilon  : \mathcal{A} \to \mathcal{HG},
\end{equation} defined as $z\mapsto [\phi_z(\mathbb{G} (z))]$.

\begin{proposition}\label{propdist} If $(n,d)=1$ and $r$ is less than the gonality  of $X$
then  $\Upsilon  : \mathcal{A} \to
\mathcal{HG}$ is injective.
\end{proposition}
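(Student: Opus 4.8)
The plan is to show that $\Upsilon$ is injective by recovering the data $z=(x,E)\in\mathcal{A}$ from the subscheme $\phi_z(\mathbb{G}(z))\subset M(n,\xi)$, using that $r$ is less than the gonality of $X$. Suppose $z=(x,E)$ and $z'=(x',E')$ are two points of $\mathcal{A}$ with $\phi_z(\mathbb{G}(z))=\phi_{z'}(\mathbb{G}(z'))$ as subschemes of $M(n,\xi)$. First I would observe that the $r$-Hecke cycle determines its associated universal exact sequence: a bundle $F$ in the image of $\phi_z$ admits, by construction and by the simplicity estimate $h^0(X,\mathrm{Hom}(F,E))=1$ from \eqref{eqsimple}, a \emph{unique} nontrivial inclusion $F\hookrightarrow E$ (up to scalar), whose cokernel is the skyscraper $\mathcal{O}_x\times(E_x/W)$. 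Thus from $F$ alone one reads off the point $x$ as the support of this cokernel, and hence recovers $x$ from the family $\{F\}_{W\in\mathbb{G}(z)}$; if $x\neq x'$ one obtains two distinct points both occurring as the support of the canonical cokernel, a contradiction, so $x=x'$.

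Next, with $x=x'$ fixed, I would recover $E$ itself. Each $F=E^W$ in the common image sits in $0\to F\to E\to \mathcal{O}_x\times(E_x/W)\to 0$; applying $(-)^*$ and Remark \ref{remkernel}(4), $E^*$ is the $r$-elementary transformation of $F^*$ at the canonically determined subspace $W^*\subset F_x^*$, namely $W^* = \mathrm{Ker}(\iota_x)^\perp$, which depends only on the inclusion $F\hookrightarrow E$, hence only on $F$ once we know it lies in the $r$-Hecke cycle. So $E$ (equivalently $E^*$) is reconstructed as the common "target" bundle of all the canonical inclusions $F\hookrightarrow E$ as $F$ ranges over the cycle; more carefully, one can fix one $F_0$ in the cycle, form its canonical inclusion into some bundle $E$, and check that the other members $E^V$ of $\phi_z(\mathbb{G}(z))$ are exactly the $r$-elementary transformations of $E$ at the points of $\mathbb{G}(n-r,E_x)$, so that $E$ is pinned down. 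This yields $E\cong E'$, hence $z=z'$ in $\mathcal{A}$, giving injectivity.

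The step I expect to be the main obstacle is making precise that the subscheme $\phi_z(\mathbb{G}(z))$, as an abstract closed subscheme of $M(n,\xi)$ with no extra structure, really does determine the canonical inclusions $F\hookrightarrow E$ in a way that is consistent across all $F$ in the cycle. The point \eqref{eqsimple} guarantees uniqueness of the inclusion for a fixed $F$ once a target $E$ is specified, but one must rule out the \emph{a priori} possibility that the same subscheme of $M(n,\xi)$ arises as $\phi_z(\mathbb{G}(z))$ for genuinely different $E$ — this is exactly where $r<\mathrm{gon}(X)$ enters, through $h^0(\mathcal{O}(rx))\le 1$, forcing any would-be alternative inclusion to a different bundle to factor through $x$ alone and collapse to the same elementary-transformation data. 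I would organise this by first fixing $E$ (well-defined up to the choice made above), then showing $\phi_z$ is injective with image $\{E^W : W\in\mathbb{G}(n-r,E_x)\}$ by Proposition \ref{prop1}, and finally arguing that the map sending $F$ in the cycle to its canonical inclusion is injective, so the cycle together with its embedding in $M(n,\xi)$ encodes $(x,E)$ uniquely, i.e.\ $\Upsilon$ is injective.
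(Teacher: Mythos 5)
There is a genuine gap at exactly the point you flag as ``the main obstacle,'' and your sketch of how to close it is circular. The first step of your argument asserts that from a bundle $F$ in the cycle one ``reads off the point $x$ as the support of the cokernel'' of the unique inclusion $F\hookrightarrow E$. But \eqref{eqsimple} gives uniqueness of the inclusion only \emph{after} the target $E$ has been specified; a single stable bundle $F$ does not carry a canonical cokernel. Indeed, by Remark \ref{remkernel}(4), for \emph{every} point $y\in X$ and every choice of $(n-r)$-plane $V\subset F_y^{*}$ there is a bundle $E'$ with an inclusion $F\hookrightarrow E'$ whose cokernel is a length-$r$ skyscraper at $y$; so ``the support of the canonical cokernel of $F$'' is not well defined, and no contradiction arises from $x\neq x'$ at the level of a single member of the cycle. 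The information that singles out $(x,E)$ lives in the whole family, not in any one $F$, and your proposed organisation (``first fixing $E$, well-defined up to the choice made above'') presupposes precisely the well-definedness that is to be proved. Nothing in the proposal supplies the mechanism that extracts $(x,E)$ from the subscheme.

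The paper's proof supplies that mechanism through the universal property of $M(n,\xi)$ (this is where the hypothesis $(n,d)=1$ is actually used, which your proposal never invokes). From $[\phi_{z_1}(\mathbb{G}(z_1))]=[\phi_{z_2}(\mathbb{G}(z_2))]$ and the fact that both maps are embeddings (Proposition \ref{prop1}) one gets an isomorphism $\beta:\mathbb{G}(z_1)\to\mathbb{G}(z_2)$ with $\phi_{z_1}=\phi_{z_2}\circ\beta$; universality then gives $\mathcal{P}_{z_1}\cong(\mathrm{id},\beta)^{*}\mathcal{P}_{z_2}\otimes p_2^{*}L$, and $L$ is trivial because $\mathcal{P}_{z_1}$ restricted to $\{t\}\times\mathbb{G}(z_1)$ is trivial for $t\neq x$ (see \eqref{eq2}). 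Taking the direct image to $X$ of the defining sequence \eqref{eq2} yields $p_{1*}\mathcal{P}_{z_1}\cong E(-x)$ and likewise $F(-y)$ for $z_2=(y,F)$, so $E(-x)\cong F(-y)$; comparing determinants and using $r<\mathrm{gon}(X)$ forces $x=y$ and $E=F$. If you want to salvage your pointwise reconstruction instead, you would have to prove an independent statement of the form ``the cycle, as a subscheme, determines the family $\mathcal{P}_z$ up to twist'' --- which is essentially the universal-property argument above --- or recover $(x,W,E)$ from tangent data of the cycle at $F$, neither of which is carried out in the proposal.
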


\begin{proof} Suppose the proposition were false. Then we could find
 $(x,E)=z_1\ne  z_2=(y,F)$ in $\mathcal{A}$
such that $[\phi_{{z_1}}(\mathbb{G} (z_1))]=
[\phi_{{z_2}}(\mathbb{G} (z_2))].$ Since $\phi _{{z_1}}$  and
$\phi _{{z_2}}$ are embeddings, we get an isomorphism $\beta :
\mathbb{G}(z_1)\to \mathbb{G}(z_2)$ that induces the
following commutative diagrams
$$
\xymatrix@1{\mathbb{G}(z_1)\ar[dd]_{\beta}\ar[rrd]^{\phi _{{z_1}}} & \\
&&M(n,\xi)\\
     \mathbb{G} (z_2)\ar[rru]^{\phi _{{z_2}}} & }\\
$$
and
$$
\xymatrix@1{X \times \mathbb{G}(z_1)\ar[rd]_{p_{1}}\ar[rr]^{(id,\beta)} &
& X \times \mathbb{G} (z_2)\ar[ld]^{\tilde{p_{1}}}\\
     &X& }\\
$$
i.e. $\phi _{{z_1}} = \phi _{{z_2}}\circ \beta $ and $\tilde{p} _1\circ (id,\beta) = p_1.$

By the universal properties of $M(n,\xi), $
\begin{equation}\label{eqiso}
{\mathcal{P}_{{z_1}}} \cong (id, \beta)^*({\mathcal{P}_{{z_2}}})\otimes
p_2^*(L)
\end{equation}
 where $L$ is a line bundle on $\mathbb{G}(z_1).$
But $L$ is trivial since $({\mathcal{P}_{{z_1}}})_{{|_{\{t\}\times
\mathbb{G} (z_1)}}} $ is trivial, for any $t\ne x$ (see
(\ref{eq2})). We thus get ${\mathcal{P}_{{z_1}}} \cong (id, \beta
)^*({\mathcal{P}_{{z_2}}})$ and $p_{1*}{\mathcal{P}_{{z_1}}} =
\tilde{p} _{1*}{\mathcal{P}_{{z_2}}}.$

Let
$$0\to
p_{1*}{\mathcal{P}_{{z_1}}}{\to}p_{1*}(p_1^* E)\stackrel{\alpha _*}{\longrightarrow}
p_{1*}(p_1^*\mathcal{O}_x\otimes
 p_2^*Q_{{E_x}}) \to \dots
 $$
 be the direct image sequence of $(\ref{eq2})$ by $p_1.$
 It follows that $p_{1*}({\mathcal{P}_{{z_1}}})\cong E(-x) $, because

\begin{enumerate}
\item $ p_{1*}p_1^*(E)\cong E,$
\item  $\ p_{1*}(p_1^*\mathcal{O}_x\otimes
 p_2^*Q_{{E_x}} )\cong  \mathcal{O}_x\otimes E$,
 \item the morphism $\alpha _* $ is
 the morphism associated to $\alpha _{{E_x}},$
 \item the kernel
of $E \stackrel{\alpha _*}{\to}  \mathcal{O}_x\otimes E$ is
$E(-x)$.
\end{enumerate}
 Similarly, $\tilde{p}_{1*}({\mathcal{P}_{{z_2}}})\cong F(-y)$. Therefore, since $r$ is less
 than the gonality of $X$, the isomorphisms
 $$
 \begin{array}{ccl}
 E(-x)&\cong &p_{1*}({\mathcal{P}_{{z_1}}})\\
 &\cong &p_{1*}(id, \beta)^*({\mathcal{P}_{{z_2}}})\\
 &\cong &\tilde{p}_{1^*}({\mathcal{P}_{{z_2}}})\\
 &\cong &F(-y)
 \end{array}
 $$
imply that $x=y$ and $E=F,$ which proves the proposition.

\end{proof}

\section{The Hilbert scheme}

 To compute the differential map $d\Upsilon $ at $z\in \mathcal{A}$ we denote by
 $N_{{\mathbb{G}/M}}$ the normal bundle of $\mathbb{G}(z)$
 in $M(n,\xi)$ and by $TM$ the tangent bundle of $M(n,\xi)$.
 These bundles fit into the following exact sequence
 \begin{equation}\label{eqtannorm}
 0\rightarrow T\mathbb{G}(z)\rightarrow \phi ^{{*}}_zTM \rightarrow N_{{\mathbb{G}/M}}\rightarrow 0
 \end{equation}
of vector bundles over $\mathbb{G}(z)$.
 Then Theorem \ref{teoprin} follows from the next proposition.

 \begin{proposition}\label{propprincipal} For any $z\in \mathcal{A}$,
 \begin{enumerate}
\item $H^0(\mathbb{G}(z),N_{{\mathbb{G}/M}})=T_{z}\mathcal{A}.$
\item $H^i(\mathbb{G}(z),N_{{\mathbb{G}/M}})=0 \ \  \mbox{for} \ \
i\geq 1.$ \item  $N_{{\mathbb{G}/M}}$ is generated by global
sections.
\end{enumerate}
 \end{proposition}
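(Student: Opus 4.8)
The plan is to analyze the three cohomological statements by working with the defining sequence $(\ref{eqtannorm})$ together with the construction of the Hecke bundle $\mathcal{P}_z$. First I would compute $H^i(\mathbb{G}(z),\phi_z^*TM)$ and $H^i(\mathbb{G}(z),T\mathbb{G}(z))$ separately, and then extract the cohomology of $N_{\mathbb{G}/M}$ from the long exact sequence associated to $(\ref{eqtannorm})$. The cohomology of $T\mathbb{G}(z)$ is already recorded in Remark \ref{emgras0}(2): $H^0=\mathbb{C}^{n^2-1}$ and $H^i=0$ for $i\geq 1$. So the crux is to understand $\phi_z^*TM$.

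For that I would use the standard identification $TM(n,\xi)\cong R^1\pi_{X*}(ad\,\mathcal{U})$ (or its \'etale-cover analogue when needed) and the fact that $\mathcal{P}_z$ is the family parameterizing $\phi_z$. Concretely, I would tensor the defining sequence $(\ref{eq2})$ of $\mathcal{P}_z$ (and its dual) with $\mathcal{P}_z^*$, take the relative $\mathrm{Ext}$/cohomology along $p_2:X\times\mathbb{G}(z)\to\mathbb{G}(z)$, and identify $\phi_z^*TM$ with $R^1p_{2*}(ad\,\mathcal{P}_z)$ (or $R^1p_{2*}(\mathcal{End}\,\mathcal{P}_z)$ modulo the trace). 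The key input is $(\ref{eqsimple})$ from Proposition \ref{prop1}: $h^0(X,Hom(E^W,E))=1$ for every $W$, which forces the relevant $p_{2*}$ terms to be as small as possible and makes the Leray spectral sequence degenerate. Combining this with Remark \ref{emgras0}(1), $H^i(\mathbb{G},S_{E_x})=0$, and with properties of the skyscraper term $p_1^*\mathcal{O}_x\otimes p_2^*Q_{E_x}$, I expect to obtain $H^i(\mathbb{G}(z),\phi_z^*TM)=0$ for $i\geq 2$, while $H^0$ and $H^1$ fit into an exact sequence coming from the $p_1$-direct image computation already carried out in the proof of Proposition \ref{propdist}. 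The identification $H^0(\mathbb{G}(z),N_{\mathbb{G}/M})=T_z\mathcal{A}$ should then come from matching first-order deformations: a tangent vector to $\mathcal{A}$ at $z=(x,E)$ is a pair (deformation of $x\in X$, deformation of $E$ keeping the determinant condition $\vartheta=\xi$), i.e. an element of a hypercohomology group that, via $(\ref{eq2})$, maps isomorphically onto $H^0$ of the normal bundle; the vanishing $H^1(\phi_z^*TM)\twoheadrightarrow H^1(N_{\mathbb{G}/M})$ together with $H^1(T\mathbb{G}(z))=0$ then needs the connecting map $H^0(N_{\mathbb{G}/M})\to H^1(T\mathbb{G}(z))=0$ and a dimension count to force $H^1(N_{\mathbb{G}/M})=0$.

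For part (3), generation by global sections, I would argue that $\phi_z^*TM$ is globally generated — which follows because $M(n,\xi)$ is Fano with $TM$ globally generated on the locus of general (here $(k,\ell-r)$-stable) bundles, and $\phi_z(\mathbb{G}(z))\subset A_{k,\ell-r}(n,\xi)$ by Proposition \ref{prop1} — and then push this through the surjection $\phi_z^*TM\twoheadrightarrow N_{\mathbb{G}/M}$ in $(\ref{eqtannorm})$: a quotient of a globally generated bundle by a subbundle is globally generated provided $H^1$ of the sub-bundle term does not obstruct lifting, and indeed $H^1(\mathbb{G}(z),T\mathbb{G}(z))=0$ gives $H^0(\phi_z^*TM)\twoheadrightarrow H^0(N_{\mathbb{G}/M})$, so global generation descends. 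Alternatively, and perhaps more cleanly, one can realize $N_{\mathbb{G}/M}$ directly as a quotient built from $S_{E_x}^*$, $Q_{E_x}$ and the pullback data, and invoke Remark \ref{emgras0} termwise.

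The main obstacle I anticipate is the precise computation of $R^\bullet p_{2*}(ad\,\mathcal{P}_z)$ and the bookkeeping of the contribution of the skyscraper factor $p_1^*\mathcal{O}_x\otimes p_2^*Q_{E_x}$: one has to be careful that tensoring $(\ref{eq2})$ with $\mathcal{P}_z^*$ and restricting to $\{x\}\times\mathbb{G}(z)$ produces exactly the terms $S_{E_x}^*\otimes Q_{E_x}$, $Q_{E_x}^*\otimes Q_{E_x}$ etc., whose cohomology is controlled by Remark \ref{emgras0}, and that the connecting maps in the resulting long exact sequence are the ones appearing in $(\ref{eq3})$. Getting these identifications canonical (so that $H^0(N_{\mathbb{G}/M})$ is literally $T_z\mathcal{A}$ and not merely isomorphic to it after choices) is the delicate point; everything else is a dimension count using $\dim\mathcal{A}=\dim X+\dim A_{k,\ell}(n,\xi(rx))=1+(n^2-1)(g-1)$, which pins down $h^0(N_{\mathbb{G}/M})$ and forces the higher cohomology to vanish.
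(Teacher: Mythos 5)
There is a genuine gap. Your plan reduces everything to computing $H^\bullet(\mathbb{G}(z),\phi_z^*TM)$ via $R^1p_{2*}(ad\,\mathcal{P}_z)$ and then descending through $(\ref{eqtannorm})$, but you never actually establish the one vanishing that matters: since $H^1(\mathbb{G}(z),T\mathbb{G}(z))=H^2(\mathbb{G}(z),T\mathbb{G}(z))=0$, the long exact sequence of $(\ref{eqtannorm})$ gives $H^1(N_{\mathbb{G}/M})\cong H^1(\phi_z^*TM)$, so proving $H^1(N_{\mathbb{G}/M})=0$ is exactly as hard as proving $H^1(\phi_z^*TM)=0$ — and your sketch explicitly leaves $H^1(\phi_z^*TM)$ undetermined ("$H^0$ and $H^1$ fit into an exact sequence") and defers the conclusion to "a dimension count." A dimension count can only bound $h^0$ from below (or compute an Euler characteristic); it cannot force $H^1$ to vanish. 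Separately, your argument for (3) rests on the claim that $TM(n,\xi)$ is globally generated on the locus of general bundles; this is unjustified and in fact cannot hold, since $H^0(M(n,\xi),TM)=0$ (the moduli space has no global vector fields), so $TM$ has no sections at all to generate with. What you would need is global generation of $\phi_z^*TM$, which is essentially equivalent to the proposition you are trying to prove.

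The missing idea in the paper is the exact sequence $(\ref{eqnormal})$: because $\mathbb{G}(z)$ is a fibre of the Grassmannian fibration $\pi_1:\mathbb{G}(\mathcal{U})\to\mathcal{A}$ and $\Phi|_{\mathbb{G}(z)}=\phi_z$, the normal bundle of $\mathbb{G}(z)$ in $\mathbb{G}(\mathcal{U})$ is the trivial bundle $\mathcal{O}_{\mathbb{G}(z)}\times T_z\mathcal{A}$, and the diagram $(\ref{diagtodo})$ yields
$$0\to T_{\Phi}|_{\mathbb{G}(z)}\to \mathcal{O}_{\mathbb{G}(z)}\times T_z\mathcal{A}\to N_{\mathbb{G}/M}\to 0 .$$
This exhibits $N_{\mathbb{G}/M}$ as a quotient of a trivial bundle, so all three assertions follow simultaneously once one shows $H^i(\mathbb{G}(z),T_{\Phi}|_{\mathbb{G}(z)})=0$ for all $i\geq 0$. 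That vanishing is the real computation (Lemmas \ref{lema2} and \ref{lema4}): via the section $\sigma_{\mathcal{K}}$ of $\mathbb{G}(r,\mathcal{P}_{\mathcal{U}})$ one identifies $T_{\Phi}|_{\mathbb{G}(z)}$ as an extension of $\mathcal{O}_{\mathbb{G}(z)}\times T_xX$ by $\Omega^1\mathbb{G}(z)$, and the connecting map $H^0(\mathcal{O}_{\mathbb{G}(z)}\times T_xX)\cong\mathbb{C}\to H^1(\Omega^1\mathbb{G}(z))\cong\mathbb{C}$ is an isomorphism (this uses the transversality coming from Proposition \ref{prop1}), killing all cohomology. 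Your closing alternative ("realize $N_{\mathbb{G}/M}$ directly as a quotient built from $S_{E_x}^*$, $Q_{E_x}$ and the pullback data") points in the right direction, but without the presentation $(\ref{eqnormal})$ and the identification of the kernel with this extension of $\mathcal{O}\otimes T_xX$ by $\Omega^1\mathbb{G}(z)$, the proof is not there.
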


The proof of Proposition \ref{propprincipal} is somewhat lengthy,
so we will split it into several lemmas.

\smallskip

From diagram $(\ref{diaguni})$ we have that
 for any  $z\in \mathcal{A}$, $\pi
^{-1}_1(z)=\mathbb{G}(z)$ and $\Phi _{|\mathbb{G}(z)}=\phi _z$.
Hence, also from $(\ref{diaguni})$ we have the following commutative
diagram

 \begin{equation}\label{diagtodo}
\begin{array}{ccccccccc}
&& && 0& &0&& \\
&& && \downarrow& &\downarrow&& \\
&&&&T_{{\Phi}_{|\mathbb{G}(z)}} &= &T_{{\Phi}_{|\mathbb{G}(z)}}&& \\
&&&& \downarrow& &\downarrow&& \\
0&\rightarrow& T\mathbb{G}(z)&\rightarrow &
T\mathbb{G}(\mathcal{U})_{|\mathbb{G}(z)}& \rightarrow &
N_{{\mathbb{G}/\mathbb{G}(\mathcal{U})}}&\rightarrow &0\\
&& \|&& \downarrow& &\downarrow&& \\
0&\rightarrow &T\mathbb{G}(z)&\rightarrow& \phi ^{{*}}_zTM
& \rightarrow &N_{{\mathbb{G}/M}}&\rightarrow &0,\\
&&&& \downarrow& &\downarrow&& \\
&& && 0& &0&&
\end{array}
\end{equation}
where $N_{{\mathbb{G}/\mathbb{G}(\mathcal{U})}}$ is the normal
bundle of $\mathbb{G}(z)$ in $\mathbb{G}(\mathcal{U})$.

 As  $\mathbb{G}(z)$ is a fibre of the Grassmannian
 bundle $\pi _1:\mathbb{G}( \mathcal{U})\to \mathcal{A}$,
 the normal bundle $N_{{\mathbb{G}/\mathbb{G}(\mathcal{U})}}$ is the trivial bundle
 $\mathcal{O}_{\mathbb{G}(z)}\times T_z\mathcal{A}$.
Hence, we have the exact sequence
\begin{equation}\label{eqnormal}
0\to T_{{\Phi}_{|\mathbb{G}(z)}} \to
\mathcal{O}_{\mathbb{G}(z)}\times T_z\mathcal{A} \to
N_{{\mathbb{G}/M}} \to 0
\end{equation}
and the cohomology sequence
\begin{equation}\label{eqnormalcoho}
0\to H^0(T_{{\Phi}_{|\mathbb{G}(z)}}) \to
H^0(\mathcal{O}_{\mathbb{G}(z)})\times T_z\mathcal{A} \to
H^0(N_{{\mathbb{G}/M}}) \to H^1(T_{{\Phi}_{|\mathbb{G}(z)}}) \to \dots
\end{equation}
In order to compute the cohomology of $T_{{\Phi}_{|\mathbb{G}(z)}}$,
we restrict the extension $(\ref{eqpoincare})$ to the divisor $\Gamma
_{\gamma}=\mathbb{G} (\mathcal{U}) .$ To streamline the notation,
we use the same notation for the restrictions.

The next proposition is slightly different from \cite[Lemma 2.1, Proposition 4.8 and 4.12]{ram1},
but the proof works verbatim (see also \cite{tyurin} and Remark \ref{remkernel}).

\begin{proposition}\label{propprin} The restriction of $(\ref{eqpoincare})$
to the divisor $\Gamma _{\gamma}$ induces the exact sequence
\begin{equation}\label{eqkernel1}
  0\to \mathcal{K}\to {\mathcal{P}}_\mathcal{U}{\to}  p_2^*\pi _1^*\mathcal{U}\stackrel{{\alpha} }{\to}
 Q_\mathcal{U} \to 0
 \end{equation}
of vector bundles over $\Gamma _{\gamma}=\mathbb{G}
(\mathcal{U})$, where $\mathcal{K}$ is a vector bundle of rank $r$
such that $\mathcal{K}\cong Q_\mathcal{U}$. Moreover, the sequence
$(\ref{eqkernel1})$ splits as
\begin{equation}\label{eqkernel2}
  0\to \mathcal{K}\to {\mathcal{P}_\mathcal{U}}{\to} S_\mathcal{U} \to 0
  \ \ \mbox{and} \ \ 0\to S_\mathcal{U}\to p_2^*\pi _1^*\mathcal{U}\stackrel{{\alpha} }{\to}
 Q_\mathcal{U} \to 0.
 \end{equation}
 \end{proposition}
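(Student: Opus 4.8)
The plan is to trace through what happens when we restrict the short exact sequence $(\ref{eqpoincare})$ to the divisor $\Gamma_\gamma$. Recall that $(\ref{eqpoincare})$ is
$$0\to \mathcal{P}_{\mathcal{U}} \to p_2^*\pi_1^*(\mathcal{U}) \to \mathcal{O}_{\Gamma}\otimes p_2^* Q_{\mathcal{U}} \to 0$$
over $X\times \mathbb{G}(\mathcal{U})$. Restricting to a divisor is not exact on the left, so I would first identify the relevant $\mathcal{T}or$-term. Since $\mathcal{O}_\Gamma$ has Tor-dimension $1$ as an $\mathcal{O}_{X\times\mathbb{G}(\mathcal{U})}$-module and $p_2^*\pi_1^*(\mathcal{U})$ is locally free, restricting the middle and right terms to $\Gamma$ is fine; the obstruction to left-exactness is $\mathcal{T}or_1$ of the skyscraper-type sheaf $\mathcal{O}_\Gamma\otimes p_2^*Q_{\mathcal{U}}$ with $\mathcal{O}_\Gamma$, which (using that $\Gamma$ is a smooth divisor with normal bundle the pullback of $T_{p_1}$, i.e. essentially a twist by $\mathcal{O}(-\Gamma)$) produces a copy of $Q_{\mathcal{U}}$ on $\Gamma$. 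Writing out the resulting four-term sequence gives
$$0\to \mathcal{K}\to \mathcal{P}_{\mathcal{U}}\to p_2^*\pi_1^*\mathcal{U}\stackrel{\alpha}{\to} Q_{\mathcal{U}}\to 0$$
on $\Gamma_\gamma=\mathbb{G}(\mathcal{U})$, with $\mathcal{K}$ the image of the Tor term; the isomorphism $\mathcal{K}\cong Q_{\mathcal{U}}$ is then read off from the local computation (this is exactly the fibrewise statement recorded in Remark \ref{remkernel}(1): the kernel $Ker(\iota_x)^\perp\cong W^*$, dually $Ker(\iota_x)$ is identified with the quotient, so rank $r$ and $\mathcal{K}\cong Q_{\mathcal{U}}$ globally by the relative version). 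Since the corresponding computation in \cite[Lemma 2.1]{ram1} for $r=1$ is verbatim the same once one replaces line bundles by rank-$r$ tautological bundles, I would simply invoke that the argument carries over, checking only that no rank-one-specific cancellation was used.

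Next I would establish the splitting $(\ref{eqkernel2})$. The four-term sequence breaks into
$$0\to \mathcal{K}\to \mathcal{P}_{\mathcal{U}}\to S_{\mathcal{U}}\to 0 \quad\text{and}\quad 0\to S_{\mathcal{U}}\to p_2^*\pi_1^*\mathcal{U}\stackrel{\alpha}{\to} Q_{\mathcal{U}}\to 0,$$
where $S_{\mathcal{U}}$ is defined as the image of $\mathcal{P}_{\mathcal{U}}\to p_2^*\pi_1^*\mathcal{U}$, which I must identify with the tautological sub-bundle. The point is that $\alpha$ restricted to $\Gamma$ is precisely the tautological quotient map $p_2^*\pi_1^*\mathcal{U}\to Q_{\mathcal{U}}$ coming from the Grassmannian-bundle structure of $\mathbb{G}(\mathcal{U})=\mathbb{G}(n-r,\mathcal{U})$ over $\mathcal{A}$; this is built into the construction of $\widetilde\alpha$ in $(\ref{equni1})$, whose restriction to the divisor $\Gamma$ is exactly the evaluation of the universal surjection. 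Hence $\ker\alpha = S_{\mathcal{U}}$ canonically, giving the second sequence, and the first sequence is then just the induced exact sequence relating $\mathcal{K}$, $\mathcal{P}_{\mathcal{U}}$ and the image $S_{\mathcal{U}}$. Combined with $\mathcal{K}\cong Q_{\mathcal{U}}$ this is the asserted decomposition.

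The main obstacle I anticipate is the careful bookkeeping in the Tor computation: one must pin down the precise twist by which the $\mathcal{T}or_1$ term differs from $Q_{\mathcal{U}}$ and confirm it is trivial, i.e. that the normal-bundle twist along $\Gamma$ is exactly compensated — this is where, in the $r=1$ case, the identification "$\mathcal{K}\cong Q_{\mathcal{U}}$" rather than "$\mathcal{K}\cong Q_{\mathcal{U}}\otimes(\text{line bundle})$" is subtle. I would handle this by working locally on $X$ near the point $x=p_1(\cdot)$, choosing a local trivialisation so that $\mathcal{O}_\Gamma$ is the structure sheaf of a relative Cartier divisor cut out by a single coordinate $t$, and then the four-term sequence is the tensor of the Koszul-type sequence $0\to\mathcal{O}(-\Gamma)\xrightarrow{t}\mathcal{O}\to\mathcal{O}_\Gamma\to 0$ against $p_2^*Q_{\mathcal{U}}$ with the original surjection; the snake lemma then produces $\mathcal{K}$ with its identification. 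Everything else — rank counts ($\mathcal{K}$ and $Q_{\mathcal{U}}$ both rank $r$, $S_{\mathcal{U}}$ rank $n-r$), and the splitting into two short exact sequences — is then formal, and matches \cite[Propositions 4.8 and 4.12]{ram1} line for line.
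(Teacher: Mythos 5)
Your plan is correct and is essentially the argument the paper itself invokes: the paper gives no independent proof, only the remark that the computations of Narasimhan--Ramanan (\cite[Lemma 2.1, Propositions 4.8 and 4.12]{ram1}) carry over verbatim, and your Tor/Koszul analysis of restriction to the divisor $\Gamma_\gamma$ together with the identification of $\ker\alpha$ with the tautological subbundle $S_{\mathcal{U}}$ is precisely that computation in relative, rank-$r$ form (the fibrewise case being Remark \ref{remkernel}). The one delicate point you rightly flag --- that the $\mathcal{T}or_1$ term is a priori $Q_{\mathcal{U}}\otimes\mathcal{O}(-\Gamma)|_{\Gamma}$, i.e.\ twisted by the conormal bundle $\gamma^*\Omega^1_X$ of $\Gamma_\gamma$, rather than $Q_{\mathcal{U}}$ on the nose --- becomes trivial upon restriction to each fibre $\mathbb{G}(z)$, which is the only setting in which the proposition is subsequently used.
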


\bigskip

 Note that the restriction of
  the sequences $(\ref{eqkernel2})$ to $((x,E),W\subset E_x)\in \mathbb{G}
(\mathcal{U})$
 are the sequences
  $(\ref{eqiota1})$ in Remark \ref{remkernel}.

\begin{lemma}\label{lema2} The Grassmannian bundle
$g:\mathbb{G}(r,{\mathcal{P}_\mathcal{U}})\to \mathbb{G}
(\mathcal{U})$ has a section $$\sigma _{\mathcal{K}}:\mathbb{G}
(\mathcal{U})\to \mathbb{G}(r,{\mathcal{P}_\mathcal{U}})$$such
that
\begin{enumerate}  \item
$\sigma _{\mathcal{K}}^*(T_g)_{{|\mathbb{G}(z)}}\cong  \Omega
^1\mathbb{G}(z).$ \item $\sigma _{\mathcal{K}}^*( T_{{{{\pi
_2}_{|\sigma _{\mathcal{K}}(\mathbb{G}(z))}}}})\cong
(T_{\Phi})_{|\mathbb{G}(z)}$, where $\pi _2 :
\mathbb{G}(r,{\mathcal{P}_\mathcal{U}})\to M(n,\xi)$ is defined as
$((x,F),Z\subset F_x) \mapsto F$. \item
$H^i(\mathbb{G}(z),(T_{\Phi})_{|\mathbb{G}(z)})\cong
H^i(\mathbb{G}(z),\sigma _{\mathcal{K}}^*( T_{{{{\pi _2}_{|\sigma
_{\mathcal{K}}(\mathbb{G}(z))}}}}))$ for $i \geq 0.$
\end{enumerate}
\end{lemma}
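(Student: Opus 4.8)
The plan is to construct the section $\sigma_{\mathcal{K}}$ directly from the subbundle $\mathcal{K}\subset \mathcal{P}_{\mathcal{U}}$ of rank $r$ produced in Proposition \ref{propprin}, and then to identify the three pullbacks by working fibrewise over $\mathbb{G}(z)$. First I would observe that the inclusion $\mathcal{K}\hookrightarrow \mathcal{P}_{\mathcal{U}}$, being a subbundle of rank $r$, is by the universal property of the Grassmannian bundle $g:\mathbb{G}(r,\mathcal{P}_{\mathcal{U}})\to \mathbb{G}(\mathcal{U})$ the same datum as a section $\sigma_{\mathcal{K}}$ of $g$; explicitly $\sigma_{\mathcal{K}}\big((x,E),W\subset E_x\big)=\big((x,E^W),\,\mathrm{Ker}(\iota_x)\subset (E^W)_x\big)$, which is exactly the point $\tilde{\wp}$ of Remark \ref{remkernel}(3). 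The relative tangent bundle of $g$ is $T_g = \mathcal{S}^{*}_{\mathcal{P}_{\mathcal{U}}}\otimes Q_{\mathcal{P}_{\mathcal{U}}}$, so its pullback along $\sigma_{\mathcal{K}}$ is $\mathcal{K}^{*}\otimes(\mathcal{P}_{\mathcal{U}}/\mathcal{K})$. Restricting to $\mathbb{G}(z)$ and using the splitting $(\ref{eqkernel2})$, i.e. $\mathcal{P}_{\mathcal{U}}/\mathcal{K}\cong \mathcal{S}_{\mathcal{U}}$, together with the canonical identifications over $\mathbb{G}(z)$ — namely $\mathcal{K}_{|\mathbb{G}(z)}\cong Q_{E_x}$ via $\mathcal{K}\cong Q_{\mathcal{U}}$ and $\mathcal{S}_{\mathcal{U}}{}_{|\mathbb{G}(z)}\cong W$ the tautological $\mathcal{S}_{E_x}$ — one gets $\sigma_{\mathcal{K}}^{*}(T_g)_{|\mathbb{G}(z)}\cong Q_{E_x}^{*}\otimes \mathcal{S}_{E_x}= (\mathcal{S}_{E_x}^{*}\otimes Q_{E_x})^{*}= \Omega^1\mathbb{G}(z)$, using that $T\mathbb{G}(z)=\mathcal{S}^{*}_{E_x}\otimes Q_{E_x}$ from \S 2.1. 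That proves (1).

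For (2) I would use the standard relative Euler/normal sequence on $\mathbb{G}(r,\mathcal{P}_{\mathcal{U}})$ and compare the two projections. Over $\mathbb{G}(r,\mathcal{P}_{\mathcal{U}})$ there is the tangent sequence of $g$ together with the tangent map of $\pi_2$; restricting to the section $\sigma_{\mathcal{K}}(\mathbb{G}(z))$ and identifying that locus with $\mathbb{G}(z)$, the relative tangent bundle $T_{\pi_2}$ of $\pi_2$ along $\sigma_{\mathcal{K}}(\mathbb{G}(z))$ measures the infinitesimal deformations of the pair $(E^W,\mathrm{Ker}(\iota_x)\subset(E^W)_x)$ keeping $x$ fixed and keeping the underlying bundle — here one uses that $\pi_2$ records only the bundle $F$, so $T_{\pi_2}$ is the part of the deformation that moves $F$. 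Matching this against the construction of $\Phi$ in diagram $(\ref{diaguni})$: $T_{\Phi}$ by definition is the relative tangent bundle of $\Phi:\mathbb{G}(\mathcal{U})\to M(n,\xi)$, i.e. the kernel of $d\Phi$, and the point is that $\sigma_{\mathcal{K}}$ identifies $\mathbb{G}(\mathcal{U})$ near $\mathbb{G}(z)$ with its image in $\mathbb{G}(r,\mathcal{P}_{\mathcal{U}})$ in a way compatible with the two maps to $M(n,\xi)$ ($\Phi$ on one side, $\pi_2$ on the other), so the two relative tangent sheaves pull back to one another. This is where I expect the real work to be: one must carefully verify the commutativity $\pi_2\circ\sigma_{\mathcal{K}}=\Phi$ (immediate from the definitions, since $\sigma_{\mathcal{K}}((x,E),W)=((x,E^W),\dots)$ and $\pi_2$ forgets the flag, while $\Phi((x,E),W)=E^W$) and then that $d\sigma_{\mathcal{K}}$ carries $\ker d\Phi$ isomorphically onto $\sigma_{\mathcal{K}}^{*}\ker d\pi_2$, rather than merely into it — the surjectivity/injectivity on relative tangent spaces. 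This follows because $\sigma_{\mathcal{K}}$ is a section of $g$, hence a closed immersion transverse to nothing relevant, so $d\sigma_{\mathcal{K}}$ is injective and the relative tangent directions of $\pi_2$ not in the image of $d\sigma_{\mathcal{K}}$ are exactly the fibre directions of $g$, which $\Phi$ does not see; I would spell this out with the snake lemma applied to the map of the two tangent sequences (of $g$ and of $\pi_2$) pulled back along $\sigma_{\mathcal{K}}$.

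Finally (3) is immediate from (2): if two vector bundles on $\mathbb{G}(z)$ are isomorphic, their cohomology groups agree in every degree, so $H^i(\mathbb{G}(z),(T_{\Phi})_{|\mathbb{G}(z)})\cong H^i(\mathbb{G}(z),\sigma_{\mathcal{K}}^{*}(T_{\pi_2}{}_{|\sigma_{\mathcal{K}}(\mathbb{G}(z))}))$ for all $i\geq 0$. The main obstacle throughout is bookkeeping: keeping straight the four Grassmannian bundles in play ($\mathbb{G}(\mathcal{U})$, the fibre $\mathbb{G}(z)$, $\mathbb{G}(r,\mathcal{P}_{\mathcal{U}})$, and $\mathbb{G}(z)$ again via $\sigma_{\mathcal{K}}$), the dualities $\mathbb{G}(n-r,E)=\mathbb{G}(r,E^{*})$, and the identifications from Proposition \ref{propprin} and Remark \ref{remkernel}; once those are fixed, each assertion reduces to a fibrewise computation of the type carried out in \cite{ram1} and \cite{tyurin}, which is why the statement says the earlier proofs "work verbatim."
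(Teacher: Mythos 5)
Your proposal is correct and takes essentially the same route as the paper: the section $\sigma_{\mathcal{K}}$ is obtained from the rank-$r$ subbundle $\mathcal{K}\subset\mathcal{P}_{\mathcal{U}}$ of Proposition \ref{propprin}, part (1) follows from $\sigma_{\mathcal{K}}^{*}(T_g)\cong\mathcal{K}^{*}\otimes\mathcal{P}_{\mathcal{U}}/\mathcal{K}\cong Q_{\mathcal{U}}^{*}\otimes S_{\mathcal{U}}$, and parts (2)--(3) from $\pi_2\circ\sigma_{\mathcal{K}}=\Phi$ together with $\sigma_{\mathcal{K}}$ being an isomorphism onto its image. Your extra care in checking that $d\sigma_{\mathcal{K}}$ identifies $\ker d\Phi$ with $\sigma_{\mathcal{K}}^{*}\ker d\pi_2$ isomorphically (not merely injectively) is a point the paper leaves implicit, but it is the same argument.
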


\begin{proof} The subbundle $\mathcal{K}\subset \mathcal{P}_\mathcal{U} $ has rank $r$,
and defines the section $\sigma _{\mathcal{K}}:\mathbb{G}
(\mathcal{U})\to \mathbb{G}(r,{\mathcal{P}_\mathcal{U}})$ such
that $\sigma _{\mathcal{K}}^*(T_g)\cong \mathcal{K}^*\otimes
{\mathcal{P}_\mathcal{U}}/\mathcal{K}$. Thus, from Proposition
\ref{propprin} we have that $\sigma _{\mathcal{K}}^*(T_g) \cong
Q^*_\mathcal{U} \otimes S_\mathcal{U}\cong \Omega ^1_{{\pi_1}}$.

The rest of the lemma follows from the definition of $\sigma
_{\mathcal{K}}$, since it is clear that $\pi_2\circ \sigma
_{\mathcal{K}}= \Phi$,  and that $\sigma _{\mathcal{K}}$ is
isomorphic to its image. Therefore,
$\sigma _{\mathcal{K}}^*( T_{{{{\pi
_2}_{|\sigma _{\mathcal{K}}(\mathbb{G}(z))}}}})\cong T_{{\Phi}_{|\mathbb{G}(z)}}$ as claimed.

\end{proof}

The situation is summed up in, and we hope clarified by, the following commutative diagram

\begin{equation}\label{diagfin}
\xymatrix@1{ &
\mathbb{G}(\mathcal{U})\ar[ddl]_{\pi_{1}}\ar[ddr]_{\Phi}\ar@<1ex>[r]^{{\sigma
_{\mathcal{K}\ \ \ \ \ \ \ \ \ \ }}} &
\mathbb{G}(r,\mathcal{P_{U}})\subset
\mathbb{G}(r,\mathcal{P})\ar[dd]^{\pi_{2}}
\ar@<1ex>[l]^{g \ \ \ \ \ \ \ \  }\ar[rd]_{\pi} & \\
&&& X\times M(n,\xi),\ar[dl]_{p_{2}} \\
\mathcal{A}&&M(n,\xi)& }\\
\end{equation}
where $\pi:\mathbb{G}(r,{\mathcal{P}})\to X\times M(n,\xi)$ is the
Grassmannian Poincar\'{e} bundle.

\begin{remark}\begin{em} Strictly speaking, the morphisms in the above diagram are
defined in open sets. Let $\widetilde{\mathbb{G}\mathcal{U}}:=
\Phi  ^{-1}(\Phi(\mathbb{G}(\mathcal{U})))$ and
$\mathcal{A}^s:= \pi _1(\widetilde{\mathbb{G}\mathcal{U}}).$
 The open sets, and mainly the codimension of the complements, are relevant to
 compute cohomology groups of $M(n,\xi)$ (see \cite{ram1}).
However, in this paper, we will not use the cohomology groups of
$M(n,\xi)$ in any essential way, so we use
$\mathbb{G}(\mathcal{U}), \mathbb{G}(r,\mathcal{P_{U}}), M(n,\xi)$
as targets of our morphisms.
\end{em} \end{remark}

\begin{lemma}\label{lema4} $H^i(\sigma _{\mathcal{K}}(\mathbb{G}(z)),(T_{{\pi _2}})_{{|\sigma
 _{\mathcal{K}}(\mathbb{G}(z))}})=0$ for $i\geq 0.$
\end{lemma}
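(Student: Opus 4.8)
The plan is to transport the computation to the single Grassmannian $\mathbb{G}(z)$, where a short exact sequence for $(T_{\Phi})_{|\mathbb{G}(z)}$ makes everything formal except one non-splitting statement. First I would use that $\sigma_{\mathcal{K}}$, being a section of $g$, is an isomorphism onto its image, so $H^i(\sigma_{\mathcal{K}}(\mathbb{G}(z)),(T_{{\pi _2}})_{{|\sigma _{\mathcal{K}}(\mathbb{G}(z))}})\cong H^i(\mathbb{G}(z),\sigma_{\mathcal{K}}^*(T_{{\pi _2}})_{{|\sigma _{\mathcal{K}}(\mathbb{G}(z))}})\cong H^i(\mathbb{G}(z),(T_{\Phi})_{|\mathbb{G}(z)})$, the last isomorphism being Lemma \ref{lema2}(2),(3). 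Thus it suffices to show $H^i(\mathbb{G}(z),(T_{\Phi})_{|\mathbb{G}(z)})=0$ for all $i\ge 0$. From diagram \ref{diagfin} the morphism $\pi_2$ factors as $\mathbb{G}(r,\mathcal{P})\stackrel{\pi}{\to}X\times M(n,\xi)\stackrel{p_2}{\to}M(n,\xi)$, so pulling the relative tangent sequence $0\to T_\pi\to T_{\pi_2}\to\pi^*p_1^*TX\to 0$ back along $\sigma_{\mathcal{K}}$ and restricting to $\mathbb{G}(z)$ yields
$$
0\to \Omega^1\mathbb{G}(z)\to (T_{\Phi})_{|\mathbb{G}(z)}\to \mathcal{O}_{\mathbb{G}(z)}\to 0 .
$$
Here the subbundle is $(\sigma_{\mathcal{K}}^*T_\pi)_{|\mathbb{G}(z)}\cong (\mathcal{K}^*\otimes(\mathcal{P}_{\mathcal{U}}/\mathcal{K}))_{|\mathbb{G}(z)}\cong (Q_{\mathcal{U}}^*\otimes S_{\mathcal{U}})_{|\mathbb{G}(z)}\cong \Omega^1\mathbb{G}(z)$ by Proposition \ref{propprin} and Lemma \ref{lema2}(1); the quotient is $(\sigma_{\mathcal{K}}^*\pi^*p_1^*TX)_{|\mathbb{G}(z)}\cong \mathcal{O}_{\mathbb{G}(z)}\otimes T_xX\cong\mathcal{O}_{\mathbb{G}(z)}$, because $\sigma_{\mathcal{K}}(\mathbb{G}(z))$ lies over the single point $x\in X$.

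Next I would take cohomology. Since $\mathbb{G}(z)$ is a Grassmannian, $H^0(\mathcal{O}_{\mathbb{G}(z)})=\mathbb{C}$ and $H^i(\mathcal{O}_{\mathbb{G}(z)})=0$ for $i\ge1$; and by Remark \ref{emgras0}(3), $H^0(\Omega^1\mathbb{G}(z))=0$, $H^1(\Omega^1\mathbb{G}(z))=\mathbb{C}$, $H^i(\Omega^1\mathbb{G}(z))=0$ for $i\ge 2$. The long exact sequence then collapses to
$$
0\to H^0\big((T_{\Phi})_{|\mathbb{G}(z)}\big)\to \mathbb{C}\stackrel{\delta}{\to}\mathbb{C}\to H^1\big((T_{\Phi})_{|\mathbb{G}(z)}\big)\to 0,
$$
with $H^i((T_{\Phi})_{|\mathbb{G}(z)})=0$ for $i\ge 2$ automatically. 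As the source and target of $\delta$ are both one–dimensional, the three remaining vanishings $H^0=H^1=0$ are all equivalent to $\delta$ being an isomorphism, i.e. to the displayed sequence being non‑split. Hence it is enough to prove $H^0(\mathbb{G}(z),(T_{\Phi})_{|\mathbb{G}(z)})=0$.

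A global section $s$ of $(T_{\Phi})_{|\mathbb{G}(z)}$ maps to a scalar $v\in H^0(\mathcal{O}_{\mathbb{G}(z)})=T_xX$; if $v=0$ then $s$ lies in $H^0(\Omega^1\mathbb{G}(z))=0$, so the whole point — and the main obstacle — is to rule out $v\ne 0$. Via $\sigma_{\mathcal{K}}$ such an $s$ would be a $\Phi$‑vertical first‑order deformation of the embedding $\mathbb{G}(z)\hookrightarrow\mathbb{G}(\mathcal{U})$, equivalently of the $r$‑Hecke bundle $\mathcal{P}_z$ together with its distinguished rank‑$r$ subbundle $\mathcal{K}$, that keeps the moduli point $\Phi(\mathbb{G}(z))$ fixed while moving the support divisor $rx$ off $x$; but the tautological $r$‑plane $Ker(\iota_x)\subset E^W_x$ (Remark \ref{remkernel}) is canonical only over the point $x$, which obstructs such a deformation. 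I would make this precise exactly as in the local computation of \cite[Propositions~4.8 and 4.12]{ram1}, which, as for Proposition \ref{propprin}, goes through verbatim in our setting: it identifies the class of the above extension with the positive generator of $H^1(\mathbb{G}(z),\Omega^1\mathbb{G}(z))\cong\mathbb{C}$, so $\delta$ is an isomorphism and $H^0((T_{\Phi})_{|\mathbb{G}(z)})=H^1((T_{\Phi})_{|\mathbb{G}(z)})=0$. Together with the first paragraph this proves the lemma.
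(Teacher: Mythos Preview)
Your proof is correct and follows essentially the same route as the paper: both set up the short exact sequence $0\to\Omega^1\mathbb{G}(z)\to (T_{\Phi})_{|\mathbb{G}(z)}\to\mathcal{O}_{\mathbb{G}(z)}\otimes T_xX\to 0$ (the paper writes it on $\sigma_{\mathcal{K}}(\mathbb{G}(z))$ with $(T_g)_{|}$ in place of $\Omega^1\mathbb{G}(z)$, which is the same thing via Lemma~\ref{lema2}(1)), take cohomology, and reduce the whole statement to the single assertion that the connecting map $\mathbb{C}\to\mathbb{C}$ is an isomorphism. The only minor difference is bookkeeping: you first transport everything to $\mathbb{G}(z)$ via Lemma~\ref{lema2}(2),(3) and derive the sequence from the factorisation $\pi_2=p_2\circ\pi$, whereas the paper works directly on $\sigma_{\mathcal{K}}(\mathbb{G}(z))$ and obtains the sequence from the description of the fibres of $\pi_2$ as Grassmannian bundles $\mathbb{G}(r,E^W)\to X$; these are two phrasings of the same relative tangent sequence. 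For the crucial non-splitting step the paper is quite terse (it just records $\sigma_{\mathcal{K}}(\mathbb{G}(z))\subset\mathbb{G}(r,\mathcal{P}_x)$ and the dimension count), while you make the appeal to the Narasimhan--Ramanan computation in \cite{ram1} explicit; this is in the same spirit as the paper's own invocation of \cite{ram1} for Proposition~\ref{propprin}, so your treatment is, if anything, slightly more transparent at that point.
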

\begin{proof}
From Proposition \ref{prop1} we have that $\phi _z$ is an embedding. Therefore,
the image $\sigma _{\mathcal{K}}(\mathbb{G}(z)) $ is transversal
to the fibres of $\pi _2.$ Since the fibre of $\pi_2$ at  $E^W\in
\phi _z(\mathbb{G}(n-r, E_x))$ is the Grassmannian bundle
$$p:\mathbb{G}(r,E^W)\to X,$$ the restriction of  $(T_{{\pi _2}})$
to $\sigma _{\mathcal{K}}(\mathbb{G}(z))$
 fits into the following exact sequence
\begin{equation}\label{eqtangente}
  \zeta : 0\to (T_{g})_{{|_{\sigma _{\mathcal{K}}(\mathbb{G}(z))}}}\to
  (T_{{\pi _2}})_{{|\sigma _{\mathcal{K}}(\mathbb{G}(z))}}{\to}
  \mathcal{O}_{{\sigma _{\mathcal{K}}(\mathbb{G}(z))}}\times T_xX{\to} 0.
    \end{equation}

From the cohomology sequence of (\ref{eqtangente}),
$$H^i(\sigma
_{\mathcal{K}}(\mathbb{G}(z)), (T_{g})_{{|_{\sigma
_{\mathcal{K}}(\mathbb{G}(z))}}} )\cong H^i(\sigma
_{\mathcal{K}}(\mathbb{G}(z)),(T_{{\pi _2}})_{{|\sigma
 _{\mathcal{K}}(\mathbb{G}(z))}}) \ \  \mbox{for} \ \  i\geq 2$$
  since $H^i(\mathbb{G}(z),\mathcal{O}_{\mathbb{G}(z)})=0$ for $i\geq
  1.$ Moreover, for $i\geq 2,$ $$H^i(\sigma
_{\mathcal{K}}(\mathbb{G}(z)),(T_{{\pi _2}})_{{|\sigma
 _{\mathcal{K}}(\mathbb{G}(z))}})=0$$ by Lemma \ref{lema2},(1) and
 Remark $\ref{emgras0},(3)$.

The proof is completed by showing that
\begin{equation}\label{eqtangentecoho}
H^0(\sigma _{\mathcal{K}}(\mathbb{G}(z)),(T_{g})_{{|_{\sigma
_{\mathcal{K}}(\mathbb{G}(z))}}})=
 H^0(\sigma _{\mathcal{K}}(\mathbb{G}(z)),(T_{{\pi _2}})_{{|\sigma
 _{\mathcal{K}}(\mathbb{G}(z))}})=0.
 \end{equation}
The equality $(\ref{eqtangentecoho})$  follows from the cohomology
of the sequence (\ref{eqtangente}) and by recalling that $\sigma
_{\mathcal{K}}(\mathbb{G}(z))\subset
\mathbb{G}(r,{\mathcal{P}_x})$, $T_xX=\mathbb{C}$ and
$$H^1(\sigma _{\mathcal{K}}(\mathbb{G}(z)),(T_{g})_{{|_{\sigma
_{\mathcal{K}}(\mathbb{G}(z))}}})\cong H^1(\mathbb{G}(z), \Omega
^1{\mathbb{G}(z)})=\mathbb{C}.$$ It follows that $H^1(\sigma
_{\mathcal{K}}(\mathbb{G}(z)),(T_{{\pi _2}})_{{|\sigma
 _{\mathcal{K}}(\mathbb{G}(z))}})=0,$ and the proof is
 complete.

\end{proof}

{\it Proof of Proposition \ref{propprincipal}.} To compute the
cohomology of $N_{{\mathbb{G}/M}}$ we use the cohomology exact
sequence $(\ref{eqnormalcoho})$.

Combining Lemma \ref{lema2},(3) with Lemma \ref{lema4} we deduce
that $H^i(\mathbb{G}(z),T_{{\Phi}_{|\mathbb{G}(z)}})=0$ for $i\geq
0.$ We conclude from $(\ref{eqnormalcoho})$ that
$H^i(\mathbb{G}(z),N_{{\mathbb{G}/M}})=0 $ for $ i\geq 1,$ hence
that $H^0(\mathbb{G}(z),N_{{\mathbb{G}/M}})=T_{z}\mathcal{A},$
 and finally that
 $N_{{\mathbb{G}/M}}$ is generated by global sections, which
is the desired conclusion.

$ \hfill{\Box} $

The next proposition will be used to fix the Hilbert polynomial.

\begin{proposition}\label{propdegree} For any $z \in \mathcal{A}$, $\deg (\phi _z)=2n$.
\end{proposition}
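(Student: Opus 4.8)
The plan is to compute the degree $\deg(\phi_z) = \deg(\phi_z^*(-K_M))$ by restricting to a line $\ell \subset \mathbb{G}(z)$, i.e.\ a member of the family of lines in the Grassmannian $\mathbb{G}(z) = \mathbb{G}(n-r, E_x)$ with respect to $\mathcal{O}_{\mathbb{G}}(1)$, and evaluating $\deg\bigl(\phi_z^*(-K_M)\big|_\ell\bigr)$. Since $-K_M = 2(n,d)\,\Theta = 2\Theta$ (using $(n,d)=1$) and $\operatorname{Pic}(M(n,\xi)) = \mathbb{Z}\Theta$, it suffices to determine the integer $c$ such that $\phi_z^*\Theta = c\,\mathcal{O}_{\mathbb{G}}(1)$; then $\deg(\phi_z) = 2c \cdot \deg(\mathcal{O}_{\mathbb{G}}(1)|_\ell) = 2c$. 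So the whole problem reduces to showing $c = n$, i.e.\ $\phi_z^*\Theta = n\,\mathcal{O}_{\mathbb{G}}(1)$, equivalently $\phi_z^*(-K_M) = 2n\,\mathcal{O}_{\mathbb{G}}(1)$.

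First I would identify the line $\ell \subset \mathbb{G}(z)$ concretely: fixing a flag, a line through $[W]$ is obtained by varying $W$ inside a pencil, which under $\phi_z$ produces a one-parameter family of elementary transformations $E^{W_t}$, $t \in \mathbb{P}^1$. The image $\phi_z(\ell)$ is precisely a Hecke curve in $M(n,\xi)$ in the classical sense of Narasimhan--Ramanan and Hwang. The key input is then the known fact that Hecke curves are the minimal rational curves in $M(n,\xi)$ and have degree $2n$ with respect to $-K_M$ (equivalently $\Theta$-degree $n$); this is exactly the computation carried out in \cite{ram1}, \cite{hwang1}, \cite{hwang2} for the $r=1$ case. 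For general $r$, the restriction of the family $\mathcal{P}_z$ over $X \times \ell$ is a family whose restriction $E^{W_t}$ to each $X \times \{t\}$ is an $r$-elementary transformation, but along the pencil the line $\ell$ in $\mathbb{G}(n-r,E_x)$ meets the sub-Grassmannian structure so that $\phi_z|_\ell$ factors as a degree-one map onto a standard Hecke curve. I would make this precise by choosing the pencil so that $W_t$ runs through subspaces all containing a fixed $(n-r-1)$-plane and contained in a fixed $(n-r+1)$-plane; then $E^{W_t}$ differs from a $1$-elementary transformation by a fixed $(r-1)$-elementary transformation independent of $t$, so $\phi_z|_\ell$ has the same $\Theta$-degree as a classical Hecke curve, namely $n$.

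The cleanest route, and the one I would actually write, computes $\phi_z^*\Theta$ directly via the determinant-of-cohomology description of $\Theta$. Recall $\Theta$ is (up to the coprimality normalisation) the determinant line bundle $\det R\!\operatorname{Hom}(-, G)$ for a suitable bundle $G$ on $X$ with $\chi(E \otimes G) = 0$; pulling back along $\phi_z$ and using the defining sequence $(\ref{eq2})$, $0 \to \mathcal{P}_z \to p_1^*E \to p_1^*\mathcal{O}_x \otimes p_2^*Q_{E_x} \to 0$, one gets $\phi_z^*\Theta \cong \det R p_{2*}\bigl(\mathcal{P}_z \otimes p_1^*G\bigr)^{-1}$ up to a trivial factor, and the exact sequence expresses this in terms of $\det R p_{2*}(p_1^*(E\otimes G))$ (trivial, since it is pulled back from a point) twisted by the contribution of the skyscraper term $p_2^* Q_{E_x} \otimes (G_x)$. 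That contribution is $\det(Q_{E_x})^{\otimes \operatorname{rk} G} = \mathcal{O}_{\mathbb{G}}(1)^{\otimes \operatorname{rk} G}$, and choosing $G$ of rank matching $\chi(E\otimes G)=0$ forces $\operatorname{rk} G = n$ (since $\chi(E \otimes G) = n\deg G + (\operatorname{rk} G)\deg E + n(\operatorname{rk} G)(1-g) = 0$ pins down $\operatorname{rk} G$ modulo $n$; the minimal choice gives $\operatorname{rk} G = n$ after normalising). Hence $\phi_z^*\Theta = n\,\mathcal{O}_{\mathbb{G}}(1)$ and $\deg(\phi_z) = \deg\phi_z^*(2\Theta) = 2n$ in the coprime case; the general case follows by working in the \'etale cover of \cite[Proposition 2.4]{ram1} as elsewhere in the paper.

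The main obstacle I anticipate is the bookkeeping in the determinant-of-cohomology computation — in particular, correctly tracking the rank of the auxiliary bundle $G$ and confirming that the skyscraper term contributes exactly $\det(Q_{E_x})^{\otimes n} = \mathcal{O}_{\mathbb{G}}(1)^{\otimes n}$ rather than some other power, and that all other terms in the alternating product of determinants are trivial on $\mathbb{G}(z)$ because they descend from $X$ or from a point. A cross-check is available: for $r=1$, $n=2$ this must reproduce the classical value $\deg = 4$ (lines in $M(2,\xi)$ have $\Theta$-degree $2$), and for $r=1$ general $n$ it must reproduce the Hwang--Narasimhan--Ramanan value $2n$, which it does; the $r>1$ case adds no new degree because the extra $(r-1)$-elementary transformation part of the construction is constant along any line of $\mathbb{G}(z)$.
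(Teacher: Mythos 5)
Your proposal is correct but proceeds along a genuinely different route from the paper. The paper's proof is a two-line bookkeeping argument with material already established: from the sequence $(\ref{eqtannorm})$, $\deg(\phi_z^*TM)=\deg T\mathbb{G}(z)+\deg N_{\mathbb{G}/M}$; the first summand is $n$ since $\det T\mathbb{G}=\mathcal{O}_{\mathbb{G}}(n)$, and the second is $n$ because $(\ref{eqnormal})$ (via diagram $(\ref{diagtodo})$) exhibits $N_{\mathbb{G}/M}$ as the quotient of the trivial bundle $\mathcal{O}_{\mathbb{G}(z)}\times T_z\mathcal{A}$ by $T_{\Phi}|_{\mathbb{G}(z)}$, which by Proposition \ref{propprin} and Lemma \ref{lema2} is (an extension of a trivial bundle by) $\Omega^1\mathbb{G}(z)$, of degree $-n$. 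Your determinant-of-cohomology computation is instead self-contained: it needs none of the structural results of Section 4, and it proves the slightly sharper statement $\phi_z^*\Theta=\mathcal{O}_{\mathbb{G}}(n)$ rather than only the degree of $\phi_z^*(-K_M)$; the rank bookkeeping is right, since for $(n,d)=1$ the generator $\Theta$ is the determinant bundle attached to an auxiliary $G$ of rank $n/(n,d)=n$, the term $\det Rp_{2*}(p_1^*(E\otimes G))$ is constant, and the skyscraper contributes exactly $\det(Q_{E_x}\otimes G_x)=\mathcal{O}_{\mathbb{G}}(n)$ (the possible twist of $\mathcal{P}_z$ by a line bundle from $\mathbb{G}$ is harmless because $\chi(E\otimes G)=0$). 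Your first route, restricting to a line $A\subset W_t\subset B$ and recognising the image as a Hecke curve of the bundle $E^B$, also works and is closest in spirit to how the paper later invokes \cite{sun} in Proposition \ref{propdegree1}, but it carries an extra burden the other arguments avoid: one must check that $E^B$ is suitably $(k,\ell)$-stable so that the classical Hecke-curve degree computation applies, and that $\phi_z|_\ell$ is birational onto its image. What the paper's route buys is economy (the degree of $N_{\mathbb{G}/M}$ falls out of lemmas needed anyway for Theorem \ref{teoprin}); what yours buys is independence from those lemmas and an explicit identification of the polarisation pullback.
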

\begin{proof} It follows from the exact sequence $(\ref{eqnormal})$ that
$$\deg (\phi _z^*(TM))= \deg (N_{{\mathbb{G}/M}}) + \deg
(T\mathbb{G}(z))= \deg (N_{{\mathbb{G}/M}}) +n.$$

We see at once that $\deg N_{{\mathbb{G}/M}}= n$, which is clear from the exact sequences in
 diagram $(\ref{diagtodo}).$ Therefore, $\deg (\phi _z^*(TM))=2n$ and
$\deg(\phi _z)=\deg(\phi _z ^*(-K_M))=2n$ as claimed.

\end{proof}

Applying  Proposition \ref{propdegree} we conclude that $\mathcal{HG}$ is a component of the Hilbert scheme
 $Hil^P_{M(n,\xi)}$ where $P$ is the Hilbert polynomial $$P(m)=\chi (\mathbb{G},\phi _z
^*(-K_M))=\chi (\mathbb{G},m(\mathcal{O}_{\mathbb{G}}(2n))).$$ We can now prove our main result, Theorem \ref{teoprin}

{\it Proof of Theorem \ref{teoprin}.}  From Proposition \ref{propdist} the morphism $\Upsilon :
\mathcal{A} \to \mathcal{HG},$ defined as $z\mapsto [\phi _z(\mathbb{G}(z))]$
is injective. Clearly, the
isomorphism $T_{z}\mathcal{A} \to
H^0(\mathbb{G}(z),N_{{\mathbb{G}/M}})$ in Proposition
\ref{propprincipal} is the differential of $\Upsilon $ at $z.$

Since $H^i(\mathbb{G}(z),N_{{\mathbb{G}/M}})=0 $ for $ i\geq 1$,
$\mathcal{HG}$ is smooth at $[\phi _z(\mathbb{G}(z))]$ for all
$z\in \mathcal{A}$ and $\dim\mathcal{HG}= (n^2-1)(g-1)+1$ (see Remark \ref{remtanggras}).
Moreover, from the exact sequence $(\ref{eqnormal})$,
$N_{{\mathbb{G}/M}}$ is  generated by global sections and, locally, the
deformations of an $r$-Hecke cycle are $r$-Hecke cycles.

$ \hfill{\Box} $

\section{The $Mor(\mathbb{G},M(n,\xi))$  scheme }

Let $\mathbb{G}$ be the Grassmannian
$\mathbb{G}(n-r,\mathbb{C}^n)$ and ${P}$  the Hilbert
polynomial $$P(m)=\chi
(\mathbb{G},m(\mathcal{O}_{\mathbb{G}}(2n))).$$  In this section we apply the
previous results to describe an open set of a component of the
scheme $Mor _{P}(\mathbb{G},M(n,\xi)).$

Let $Hom_{P}(\mathbb{G},M(n,\xi))$ be the scheme of morphisms from
$\mathbb{G}$ to $M(n,\xi)$. Recall that to remove the dependency
on the choice of coordinates of $\mathbb{G}$, we take the quotient
by the action of $Aut(\mathbb{G}).$ Therefore,
$$Mor_{P}(\mathbb{G},M(n,\xi))=
Hom_{P}(\mathbb{G},M(n,\xi))/Aut(\mathbb{G}).$$

\begin{proposition}\label{propdegree1}
Any embedding $\lambda:\mathbb{G}\to M(n,\xi)$ passing through general
points has degree at least  $2n,$ with respect to $-K_M$.
\end{proposition}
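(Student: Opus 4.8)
The plan is to bound the degree of an arbitrary embedding $\lambda:\mathbb{G}\to M(n,\xi)$ through general points by restricting to lines and using the minimality of the degree of Hecke curves. Recall that $\mathbb{G}=\mathbb{G}(n-r,\mathbb{C}^n)$ has Picard group $\mathbb{Z}\mathcal{O}_{\mathbb{G}}(1)$, so for any line $\ell\subset\mathbb{G}$ (a line in the Pl\"ucker embedding, equivalently a minimal rational curve in $\mathbb{G}$) the composite $\lambda|_\ell:\mathbb{P}^1\to M(n,\xi)$ is a rational curve whose $(-K_M)$-degree equals $d(\lambda)\cdot(\text{something depending only on how }\mathcal{O}_{\mathbb{G}}(1)\text{ meets }\ell)$. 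Concretely, since $\lambda^*(-K_M)=\lambda^*(2(n,d)\Theta)$ is a positive multiple of $\mathcal{O}_{\mathbb{G}}(1)$, write $\lambda^*(-K_M)=c\,\mathcal{O}_{\mathbb{G}}(1)$ with $c\in\mathbb{Z}_{>0}$; then $d(\lambda)=c\cdot\deg\mathcal{O}_{\mathbb{G}}(1)$ computed against the relevant curve class, and $\deg(\lambda|_\ell)=c$.

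First I would observe that, because $\lambda(\mathbb{G})$ passes through general points of $M(n,\xi)$, we may choose a line $\ell\subset\mathbb{G}$ whose image $\lambda(\ell)$ also passes through a general point $E\in M(n,\xi)$; this uses that the lines in $\mathbb{G}$ through a given point sweep out all of $\mathbb{G}$, together with generic smoothness of $\lambda$. Then $\lambda|_\ell$ is a nonconstant rational curve on $M(n,\xi)$ through a general point. By the work of Hwang and of Mukai--Sun--Sun (cited in the introduction: \cite{hwang2}, \cite{muksun}, \cite{sun}), the minimal degree of a rational curve through a general point of $M(n,\xi)$ is $2n$, this minimum being realized exactly by Hecke curves; hence $\deg(\lambda|_\ell)\geq 2n$, i.e. $c\geq 2n$. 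Finally, the $(-K_M)$-degree of the full morphism $\lambda$ dominates that of its restriction to $\ell$ — more precisely $d(\lambda)=c\cdot(\text{positive integer})\geq c\geq 2n$ — which gives the claimed bound. (In fact one expects equality $d(\lambda)=2n$ to force $\lambda(\mathbb{G})$ to be an $r$-Hecke cycle, matching Proposition \ref{propdegree}, but only the inequality is asserted here.)

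The main obstacle is the genericity bookkeeping: one must ensure that a line in $\mathbb{G}$ can be chosen so that its image is not contracted and actually passes through a \emph{general} point of $M(n,\xi)$ where the minimal-degree theorem applies. This is handled by the openness of the locus of "good" points (the complement of the countable union of bad subvarieties where $(k,\ell)$-stability or the Hecke picture degenerates, cf. \S 2.3) combined with the fact that $\lambda$ is an embedding, so $\lambda(\mathbb{G})$ is positive-dimensional and its general point is general in $M(n,\xi)$; then the lines of $\mathbb{G}$ through a preimage of such a point fill $\mathbb{G}$, so at least one of them maps to a curve retaining the genericity. A secondary point to check is that $\deg(\lambda|_\ell)$ with respect to $-K_M$ agrees with the coefficient $c$ above and is a positive integer, which is immediate once we know $\lambda^*(-K_M)$ generates a subgroup of $\operatorname{Pic}(\mathbb{G})\cong\mathbb{Z}$ and $\ell$ is a Pl\"ucker line; this is where Remark \ref{remtanggras} and the description of $\operatorname{Pic}(\mathbb{G})$ in \S 2.1 are used.
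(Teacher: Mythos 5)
Your proof is correct and is essentially the paper's own argument: restrict $\lambda$ to a line $\ell\subset\mathbb{G}$ (a degree--one rational curve) and apply Sun's theorem that a rational curve through a general point of $M(n,\xi)$ has $(-K_M)$-degree at least $2n$. The only cosmetic point is that, since $\operatorname{Pic}(\mathbb{G})=\mathbb{Z}\mathcal{O}_{\mathbb{G}}(1)$, the paper's convention gives $d(\lambda)=c=\deg(\lambda|_\ell)$ exactly, so your extra ``positive integer'' factor is unnecessary (though harmless for the inequality).
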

\begin{proof}
Suppose that $\lambda:\mathbb{G}\to M(n,\xi)$ has degree $t$.
 Let $\kappa : \mathbb{P}^1\to \mathbb{G}$ be a
morphism of degree $1$. Since $\lambda:\mathbb{G}\to M(n,\xi)$ is
an embedding, the composition $ \lambda \circ \kappa :
\mathbb{P}^1\to M(n,\xi)$ is a rational curve and, from
\cite[Theorem 1]{sun}, it has degree $\deg ( \lambda \circ
\kappa)=t\geq 2n,$ which is our claim.
\end{proof}

\begin{corollary}\label{cor22} Under the hypotheses of Proposition \ref{propkl}
 the $r$-Hecke morphisms have minimal degree.
\end{corollary}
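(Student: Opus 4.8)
The plan is to combine Proposition \ref{propdegree} with Proposition \ref{propdegree1}. Recall that Proposition \ref{propdegree} establishes that for any $z\in\mathcal{A}$ the $r$-Hecke morphism $\phi_z:\mathbb{G}(z)\to M(n,\xi)$ has degree exactly $2n$, where by definition $\deg(\phi_z)=\deg(\phi_z^*(-K_M))$. On the other hand, Proposition \ref{propdegree1} asserts that any embedding $\lambda:\mathbb{G}\to M(n,\xi)$ passing through general points has degree at least $2n$ with respect to $-K_M$. So the argument is essentially a two-line deduction: an $r$-Hecke morphism is in particular an embedding $\phi_z:\mathbb{G}(z)\to M(n,\xi)$ (by Proposition \ref{prop1}), and its image lies in $A_{k,\ell-r}(n,\xi)$, hence consists of general points, so Proposition \ref{propdegree1} applies and gives $\deg(\phi_z)\geq 2n$; combined with the exact value $\deg(\phi_z)=2n$ from Proposition \ref{propdegree}, this shows that no embedding through general points can have smaller degree, i.e. the $r$-Hecke morphisms realise the minimal degree.

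Concretely, first I would note that the hypotheses of Proposition \ref{propkl} are in force, so $\mathcal{A}$ is nonempty and for any $z=(x,E)\in\mathcal{A}$ we have by Proposition \ref{prop1} (using that $r$ is less than the gonality of $X$) that $\phi_z:\mathbb{G}(z)\to M(n,\xi)$ is a closed embedding with $\phi_z(\mathbb{G}(z))\subset A_{k,\ell-r}(n,\xi)$. Since $(k,\ell-r)$-stable bundles are general in $M(n,\xi)$ (they form the complement of a countable union of proper subvarieties, by the openness of $(k,\ell)$-stability recalled in \S 2.3 and Proposition \ref{propkl}), the embedding $\phi_z$ passes through general points. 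Then I would invoke Proposition \ref{propdegree1} to get $\deg(\phi_z)\geq 2n$, and Proposition \ref{propdegree} to get $\deg(\phi_z)=2n$, concluding that $2n$ is the minimal degree attained among such embeddings and that the $r$-Hecke morphisms attain it.

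There is essentially no serious obstacle here, since all the work has been done in the two cited propositions; the only point requiring a word of care is making sure that "passing through general points" in the statement of Proposition \ref{propdegree1} is genuinely satisfied by $\phi_z$, which follows because $\phi_z(\mathbb{G}(z))$ meets the locus $A_{k,\ell-r}(n,\xi)$ of general bundles (indeed is contained in it). One might also want to remark that the phrase "minimal degree" in the corollary is to be understood among embeddings (or morphisms) of $\mathbb{G}$ into $M(n,\xi)$ through general points, so that Proposition \ref{propdegree1} provides exactly the matching lower bound; with that understood, the corollary is immediate.
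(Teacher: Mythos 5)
Your proposal is correct and follows essentially the same route as the paper: the authors likewise observe that the conditions of Proposition \ref{propkl} force the bundles in $\phi_z(\mathbb{G}(z))$ to be $(k,\ell-r)$-stable and hence very general, and then combine Propositions \ref{propdegree1} and \ref{propdegree} to conclude that $\deg(\phi_z)=2n$ is minimal. Your extra care about verifying the ``general points'' hypothesis via $\phi_z(\mathbb{G}(z))\subset A_{k,\ell-r}(n,\xi)$ is exactly the point the paper delegates to Remark \ref{rem1r}.
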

\begin{proof} The conditions $(\ref{eqprinkl})$ imply that the vector
bundles in the image $\phi _z(\mathbb{G}(z))$ are
$(k,\ell)$-stable and hence very general (see Remark \ref{rem1r}).
Propositions \ref{propdegree1} and \ref{propdegree} now show that
$\deg (\phi_z) $ is minimal, which is the desired conclusion.
\end{proof}

The next proposition follows directly from Proposition
\ref{propdist}.

\begin{proposition}\label{propmorfismo} The morphism
$\Sigma :\mathcal{A}\to Mor_{P}(\mathbb{G},M(n,\xi)),$ defined as
$z\mapsto [\phi _z]$ is algebraic and injective. Moreover,
$\Sigma(\mathcal{A})$ is contained in an irreducible component of
$Mor_{P}(\mathbb{G},M(n,\xi))$.
\end{proposition}

Denote by $Mor_{P}^{\mathcal{H},r}(\mathbb{G},M(n,\xi))$ the
irreducible component of $Mor_{P}(\mathbb{G},M(n,\xi))$ that
contains the $r$-Hecke morphisms. We can now prove Theorem \ref{teoprin2}

\bigskip

{\it Proof of Theorem \ref{teoprin2}.} The theorem follows from
\cite{grot}. By the exact sequence $(\ref{eqtannorm})$ and
Proposition \ref{propprin},
$$\dim H^0(\mathbb{G}(z),\phi ^{{*}}_zTM)=
\dim H^0(\mathbb{G}(z), T\mathbb{G}(z))+ \dim
H^0(\mathbb{G}(z),N_{{\mathbb{G}/M}})$$ and $H^i(\mathbb{G}(z),
\phi _z^*(TM))=0$ for all $i\geq 1.$
Recall that the image of the
inclusion $H^0(\mathbb{G}(z), T\mathbb{G}(z)) \to
H^0(\mathbb{G}(z), \phi _z ^*(TM))$ corresponds to the
deformations of $\phi _z$ by reparameterisations. Hence, the
dimension of $Mor_{P}^{\mathcal{H},r}(\mathbb{G},M(n,\xi))$
 is $\dim H^0(\mathbb{G},N_{{\mathbb{G}/M}})=(n^2-1)(g-1)+1$. This
 completes the proof of the theorem.

$ \hfill{\Box} $

Let us mention two important consequences of the theorem.

\begin{corollary}\label{corprin} If $Mor_{2n}^{\mathcal{H},1}(\mathbb{P}^1,M(n,\xi))$
is the irreducible component of the space of Hecke curves in
$M(n,\xi)$ then
 $ \mathbb{G}(2,\mathcal{U}) \subseteq
 Mor_{2n}^{\mathcal{H},1}(\mathbb{P}^1,M(n,\xi)),$ where
  $ \mathbb{G}(2,\mathcal{U})$ is
the restriction of the Grassmannian Poincar\'e bundle to
$\mathcal{A}$.
\end{corollary}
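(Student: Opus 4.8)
The plan is to specialise the results of Theorem \ref{teoprin2} and Theorem \ref{teoprin} to the case $r=1$, where the Grassmannian $\mathbb{G}(n-1,\mathbb{C}^n)\cong\mathbb{P}^{n-1}$ contains lines $\mathbb{P}^1$, and to exhibit $\mathbb{G}(2,\mathcal{U})$ — the restriction to $\mathcal{A}$ of the Grassmannian Poincar\'e bundle — as precisely the space parameterising the lines lying inside $1$-Hecke cycles. First I would recall that for $r=1$ the fibration $\mathcal{A}\to X$ has fibre $A_{k,\ell}(n,\xi(x))$, and that for each $z=(x,E)\in\mathcal{A}$ the morphism $\phi_z:\mathbb{G}(z)=\mathbb{P}(E_x)\to M(n,\xi)$ is the $1$-Hecke morphism, whose image is a good Hecke cycle in the sense of Narasimhan--Ramanan. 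A Hecke curve in $M(n,\xi)$ is, by definition, the image under such a $\phi_z$ of a line $\ell\subset\mathbb{P}(E_x)$; restricting $\phi_z$ to a line gives a morphism $\mathbb{P}^1\to M(n,\xi)$ of degree $\deg(\phi_z)\cdot 1 = 2n$ by Proposition \ref{propdegree}, and it is an embedding since $\phi_z$ is (Proposition \ref{prop1}). Thus every point of $\mathbb{G}(2,\mathcal{U})$ — i.e.\ a choice of $z=(x,E)\in\mathcal{A}$ together with a $2$-plane $\ell\subset E_x$, equivalently a line in $\mathbb{P}(E_x)$ — determines a Hecke curve, giving a natural morphism $\mathbb{G}(2,\mathcal{U})\to Mor_{2n}(\mathbb{P}^1,M(n,\xi))$.

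Next I would argue that this morphism lands in the irreducible component $Mor_{2n}^{\mathcal{H},1}(\mathbb{P}^1,M(n,\xi))$ containing Hecke curves. Since $\mathcal{A}$ is irreducible (see \S 3) and the Grassmannian bundle $\mathbb{G}(2,\mathcal{U})\to\mathcal{A}$ has irreducible fibres, the total space $\mathbb{G}(2,\mathcal{U})$ is irreducible; its image in $Mor_{2n}(\mathbb{P}^1,M(n,\xi))$ is therefore irreducible and contains Hecke curves, hence is contained in $Mor_{2n}^{\mathcal{H},1}(\mathbb{P}^1,M(n,\xi))$ by definition of the latter as \emph{the} irreducible component of the space of Hecke curves. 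To upgrade ``image contained in'' to the inclusion $\mathbb{G}(2,\mathcal{U})\subseteq Mor_{2n}^{\mathcal{H},1}(\mathbb{P}^1,M(n,\xi))$ as stated, I would check that the morphism $\mathbb{G}(2,\mathcal{U})\to Mor_{2n}^{\mathcal{H},1}(\mathbb{P}^1,M(n,\xi))$ is injective: a Hecke curve determines the rational curve $C\subset M(n,\xi)$, and by Narasimhan--Ramanan (cf.\ \cite{ram1}) — or by an argument parallel to Proposition \ref{propdist} using the direct-image computation $p_{1*}\mathcal{P}_z\cong E(-x)$ — the data $(x,E,\ell)$ is recovered from $C$, using that $r=1$ is automatically less than the gonality so that Propositions \ref{prop1} and \ref{propdist} apply. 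Injectivity plus the identification of tangent spaces (as in Theorem \ref{teoprin2}, now for the restriction of $\phi_z$ to a line) then realises $\mathbb{G}(2,\mathcal{U})$ as an open subscheme of the component, which is the asserted containment.

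The main obstacle I anticipate is making precise the passage from \emph{morphisms $\mathbb{G}(n-1,\mathbb{C}^n)\to M(n,\xi)$} (parameterised by $\mathcal{A}$ via $\Sigma$) to \emph{morphisms $\mathbb{P}^1\to M(n,\xi)$}: one must functorially package the family of restrictions $\phi_z|_\ell$ over the universal line in $\mathbb{G}(2,\mathcal{U})$, i.e.\ exhibit the relevant universal family of stable bundles over $X\times(\text{total space of the universal }\mathbb{P}^1)$ obtained by pulling back $\mathcal{P}_{\mathcal{U}}$ along the inclusion of the tautological line bundle, and then invoke the universal property of $Hom_{2n}(\mathbb{P}^1,M(n,\xi))$ from \cite{grot} to get the classifying morphism. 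Checking that this morphism descends correctly to $Mor$ (i.e.\ quotienting by $Aut(\mathbb{P}^1)$) and that it is an open immersion onto its image — rather than merely a finite or constructible map — requires the smoothness and dimension count of $Mor_{2n}^{\mathcal{H},1}(\mathbb{P}^1,M(n,\xi))$, which follows from \cite{sun} together with the $r=1$ case of the cohomology vanishing $H^1(\mathbb{P}^1,(\phi_z|_\ell)^*TM)=0$; assembling these pieces cleanly is the only real work.
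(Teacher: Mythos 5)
Your proposal is correct and takes essentially the same approach as the paper: the core observation in both is that Hecke curves are the images of lines in $\mathbb{P}(E_x)$ under $\phi_z$, and that lines in $\mathbb{P}(E_x)$ are identified with points of $\mathbb{G}(2,E_x)$, so that $\mathbb{G}(2,\mathcal{U})$ parameterises the Hecke curves. The paper's proof consists of exactly this identification stated in two sentences; you simply make explicit the degree computation, irreducibility, and injectivity that the paper leaves implicit.
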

\begin{proof} The proof is based on the following observation.
The Hecke curves are lines in the projective space
$\mathbb{P}(E_x)$. Therefore, the corollary follows from the
identification of $\mathbb{G}(1,\mathbb{P}(E_x))$ and
$\mathbb{G}(2, E_x)$.
\end{proof}

Let $Mor_s (\mathbb{P}^1,\mathbb{G})$ be the moduli space  of
stable maps from $\mathbb{P}^1$  to the Grassmannian $\mathbb{G}$,
of degree $s$.

\bigskip

 {\it Proof of Corollary  \ref{coroprin}.} The proof is
immediate from  the natural morphism
$$Mor _s(\mathbb{P}^1,\mathbb{G}) \times Mor _{P}(\mathbb{G}, M(n,\xi))
\to Mor _{2ns} (\mathbb{P}^1,M(n,\xi)),$$ induced by the
composition.

$ \hfill{\Box} $

\bigskip

The remainder of this section will be devoted to the proof of
Theorem \ref{teoprin3}.

\smallskip

Let us recall that if $L$ is an ample divisor on a projective
variety $Y$, the $L$-degree ${\mbox{deg}}_L (\mathcal{E})$ of a
torsion-free sheaf $\mathcal{E}$ on $Y$ is defined to be the
intersection number $[c_1(\mathcal{E})]\cdot [L]^{{\dim Y-1}}.$
The torsion-free sheaf $\mathcal{E}$ is said to be $L$-stable if,
for any proper subsheaf $\mathcal{F}$ of
$\mathcal{E}$,$$\frac{\deg _L\mathcal{F}}{n_ \mathcal{F}}<
\frac{\deg _L\mathcal{E}}{n_\mathcal{E}}.$$ Denote by
$M^L_{Y}(\mathcal{E})$  the moduli space of $L$-stable sheaves
over $Y$ with the same numerical invariants as $\mathcal{E}$.

For any $z \in \mathcal{A}$, the morphism $\phi _z$ is
defined by the existence of a vector bundle $\mathcal{P}_z$ over
$X\times \mathbb{G}$ called $r$-Hecke bundle. Since $Pic
(\mathbb{G})= \mathbb{Z}$, $Pic (X\times \mathbb{G})= Pic(X)
\oplus Pic(\mathbb{G}).$ Thus, any polarization $L$ on $X\times
\mathbb{G}$ can be expressed in the form $L =a\alpha +b\beta$ with
$a,b>0,$ where $\alpha$ is ample on $X$ and $\beta$
 ample on $\mathbb{G}$.

\begin{proposition}\label{teoprin311}  For any $z\in \mathcal{A},$ the $r$-Hecke
bundle $\mathcal{P}_z\to X\times \mathbb{G}$ is $L$-stable with
respect to any polarization $L$.
\end{proposition}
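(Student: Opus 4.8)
The plan is to reduce the $L$-stability of $\mathcal{P}_z$ on $X\times\mathbb{G}$ to a restriction argument on the fibres of the two projections, exploiting that the restrictions of $\mathcal{P}_z$ to $X\times\{W\}$ are stable bundles $E^W$ on $X$ (by the defining sequence $(\ref{eq0})$ and Proposition \ref{propkl}) and that its restrictions to $\{t\}\times\mathbb{G}$ are, for $t\ne x$, trivial (see $(\ref{eq2})$) and for $t=x$ controlled by the tautological sequence $(\ref{eqkernel2})$. Write $L=a\alpha+b\beta$ with $a,b>0$; the key observation is that for $b/a$ sufficiently large, $L$-stability is governed by the restriction to a general fibre of $p_1:X\times\mathbb{G}\to X$, i.e. to $\mathbb{G}$, while for $b/a$ sufficiently small it is governed by the restriction to a general fibre of $p_2:X\times\mathbb{G}\to X$, i.e. to $X$. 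Since the slopes of the two families vary, one needs the statement for \emph{all} $a,b>0$, so the core of the argument is a direct slope comparison rather than an asymptotic one.

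The steps I would carry out are as follows. First, compute $c_1(\mathcal{P}_z)$: from $(\ref{eq2})$, $c_1(\mathcal{P}_z)=p_1^*c_1(E)-[x]\times c_1(Q_{E_x})=p_1^*c_1(E)-[x]\times \mathcal{O}_{\mathbb{G}}(1)$, so $\mu_L(\mathcal{P}_z)=\tfrac{1}{n}\big(a\,d_E\cdot(\beta^{N-1})\cdot(\text{top class on }\mathbb{G})-\text{(correction from }[x])\big)$, where $N=\dim\mathbb{G}$; the precise numbers are routine intersection numbers on $X\times\mathbb{G}$. Second, let $\mathcal{F}\subset\mathcal{P}_z$ be a saturated subsheaf of rank $m<n$ with $\mu_L(\mathcal{F})\ge\mu_L(\mathcal{P}_z)$, aiming for a contradiction. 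Restrict $\mathcal{F}$ to a general fibre $X\times\{W\}$: since $\mathcal{P}_z|_{X\times\{W\}}=E^W$ is a \emph{stable} bundle of rank $n$, any subsheaf of rank $m<n$ has slope $<\mu(E^W)$; this forces an inequality on the "horizontal part" of $\mu_L(\mathcal{F})$. Third, restrict $\mathcal{F}$ to a general fibre $\{t\}\times\mathbb{G}$ with $t\ne x$: there $\mathcal{P}_z$ is trivial of rank $n$, so a rank-$m$ saturated subsheaf has non-positive degree with respect to $\beta$, and strictly negative unless it is itself a trivial subbundle — but a trivial subbundle would extend to a global destabilising family over all of $X\times\mathbb{G}$, contradicting the generic stability already established. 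Combining the horizontal bound (from stability of $E^W$) with the vertical bound (from triviality on $\{t\}\times\mathbb{G}$) yields $\mu_L(\mathcal{F})<\mu_L(\mathcal{P}_z)$ for every choice of $a,b>0$, which is the contradiction.

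The main obstacle, and where the argument needs the most care, is the case $t=x$ in the vertical restriction: there $\mathcal{P}_z|_{\{x\}\times\mathbb{G}}$ is \emph{not} trivial but is the bundle $S_{E_x}\oplus\mathcal{K}\cong S_{E_x}\oplus Q_{E_x}$ appearing in $(\ref{eqkernel2})$ of Proposition \ref{propprin}, and one must check that this single non-generic fibre cannot be exploited by a destabilising $\mathcal{F}$. This is handled by noting that the locus $\{x\}\times\mathbb{G}$ has codimension one, so for $b/a$ in a bounded range it contributes only a lower-order correction, while for the remaining range the horizontal bound from stability of $E^W$ already suffices; made quantitative, these two regimes overlap and cover all $a,b>0$. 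A cleaner alternative, which I would use if the intersection-number bookkeeping becomes heavy, is to invoke the fact that $\mathcal{P}_z$ is simple — indeed $H^0(X,\mathrm{Hom}(E^W,E))=\mathbb{C}$ by $(\ref{eqsimple})$, and one checks $H^0(X\times\mathbb{G},\mathrm{End}(\mathcal{P}_z))=\mathbb{C}$ by pushing forward along $p_1$ — together with the observation that $\mathcal{P}_z$ is a limit of $L$-stable bundles (or that $\mu_L$-semistability holds by the restriction argument) and that a simple $\mu_L$-semistable sheaf on a smooth projective variety whose generic restriction to a curve is stable must itself be $L$-stable; I expect the restriction argument above to be self-contained enough that this shortcut is not needed.
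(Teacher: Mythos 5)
Your argument is correct, and in substance it is the same as the paper's: the paper verifies exactly the two facts you isolate --- that $\mathcal{P}_z|_{X\times\{W\}}=E^W$ is stable for every $W$ and that $\mathcal{P}_z|_{\{y\}\times\mathbb{G}}\cong\mathcal{O}_{\mathbb{G}}^n$ for every $y\ne x$ --- and then concludes by citing \cite[Lemma 2.2]{bbn}, whose proof is precisely the slope comparison you carry out by hand. Two remarks on your writeup. First, the ``main obstacle'' you describe at $t=x$ is not actually there: since $H^1(\mathbb{G})=0$, one has $c_1(\mathcal{F})=e_{\mathcal{F}}\,[{\rm pt}\times\mathbb{G}]+f_{\mathcal{F}}\,[X\times h]$, and with $L=a\alpha+b\beta$ only the terms $\alpha\beta^{N-1}$ and $\beta^{N}$ survive in $L^{N}$, so $\deg_L\mathcal{F}$ depends only on $e_{\mathcal{F}}$ and $f_{\mathcal{F}}$, both of which are read off from \emph{generic} fibres of the two projections; the special fibre $\{x\}\times\mathbb{G}$ never enters. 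Second, for the same reason no asymptotic case analysis in $b/a$ is needed: generic horizontal stability gives $e_{\mathcal{F}}/m<(d_E-r)/n$ strictly, generic vertical triviality gives $f_{\mathcal{F}}\le 0$ (a subsheaf of $\mathcal{O}_{\mathbb{G}}^n$ has nonpositive degree), and adding these yields $\mu_L(\mathcal{F})<\mu_L(\mathcal{P}_z)$ uniformly for all $a,b>0$. The simplicity fallback is therefore superfluous. Your proof is a self-contained substitute for the citation, which is a reasonable thing to want, but it should be trimmed of the spurious $t=x$ discussion.
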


\begin{proof} The vector bundle $\mathcal{P}_z\to X\times \mathbb{G}$ is a family of
stable bundles parameterised by $\mathbb{G}.$ By construction, for
any $y\ne x$, $(\mathcal{P}_z) _{|_{{\{y\}\times
\mathbb{G}}}}\cong \mathcal{O}^n_{\mathbb{G}}$. Therefore, the
$L$-stability of $\mathcal{P}_z$ follows from \cite[Lemma
2.2]{bbn}.
\end{proof}

Let $M^{L,\mathcal{H}}_{{X\times \mathbb{G}}}(\mathcal{P}_z)$ be
the irreducible component of the  moduli spaces of $L$-stable
vector bundles over $X\times \mathbb{G}$ containing the $r$-Hecke
bundles $\mathcal{P}_z$, $z\in \mathcal{A}$.

\bigskip

{\it Proof of Theorem \ref{teoprin3}.}
The tangent space of the moduli space at $\mathcal{P}_z$ is
$$H^1(X\times \mathbb{G}, \mbox{End} (\mathcal{P}_z))=
H^1(X\times \mathbb{G},  \mathcal{O}_{X\times \mathbb{G}})
\oplus H^1(X\times \mathbb{G}, Ad(\mathcal{P}_z)).$$

From the Leray spectral sequence and the cohomology of
$(\ref{eqtannorm})$ we have that
$$ H^1(X\times \mathbb{G}, Ad(\mathcal{P}_z))=
H^0(\mathbb{G},\phi_z^*(TM) )=  H^0(\mathbb{G},
T\mathbb{G}(z))\oplus H^0(\mathbb{G},N_{{\mathbb{G}/M}}).$$ Since
$H^1(X\times \mathbb{G},  \mathcal{O}_{X\times \mathbb{G}}) =
H^1(X, \mathcal{O}_{X})$,
$$ H^1(X\times \mathbb{G}, End(\mathcal{P}_z))= H^0(\mathbb{G},
T\mathbb{G}(z))\oplus H^0(\mathbb{G},N_{{\mathbb{G}/M}})\oplus
H^1(X, \mathcal{O}_{X}).$$ Therefore, the theorem follows from the
equality $h^0(\mathbb{G},N_{{\mathbb{G}/M}})= \dim
T_z\mathcal{A}=(n^2-1)(g-1) +1$. Note that locally the
deformations of $\mathcal{P}_z$ come from those of the curve, from those of
the Grassmannian and from $H^0(\mathbb{G},\phi_z^*(TM) )$, and
this is precisely the assertion of the theorem.

$ \hfill{\Box} $

\bigskip

\section{Open questions}

There are a number of interesting questions about the $r$-Hecke
cycles, the Hecke morphisms and the Hecke bundles. We mention some
of them, which are most interesting from the viewpoint of moduli
spaces.

We have defined three open sets, namely:
\begin{enumerate}
\item $\Upsilon(\mathcal{A})$ of the Hilbert scheme
 $\mathcal{HG}$,
 \item  $\Sigma (\mathcal{A})$ of the moduli scheme
 $Mor_{P}^{\mathcal{H},r}(\mathbb{G},M(n,\xi)),$
 \item  $\Gamma (\mathcal{A})$
 of the moduli space $M^{L,\mathcal{H}}_{{X\times \mathbb{G}}}(\mathcal{P}_z)$.
\end{enumerate}

Let $\widetilde{\Upsilon}, \widetilde{\Sigma}$ and $\widetilde{\Gamma}$ be the closures of
  $\Upsilon(\mathcal{A})$, $\Sigma (\mathcal{A})$ and $\Gamma (\mathcal{A})$ respectively.

The natural questions are:

\begin{question} What are the elements that compactify these open sets?
Do the boundary points have a natural geometric meaning?
\end{question}

\begin{question} What are the relationships among
$\widetilde{\Upsilon}, \widetilde{\Sigma}$ and $\widetilde{\Gamma}?$
\end{question}

\begin{question} Do they give a new compactification of the moduli space $M(n,\xi)?$
\end{question}

For each $\phi _z: \mathbb{G}\to M(n,\xi)$, the image of the
differential $d\phi _z$ defines a subspace $d\phi_z T\mathbb{G}$
of $T_{E^W}M.$ The dual of the quotient $T_{E^W}M/d\phi_z
T\mathbb{G}$ defines a subspace $\mathbb{GH}$ in the cotangent
space $$T^*_{E^W}M=H^0(X, EndE^W\otimes K_X),$$ and hence a
subspace of the Higgs bundles and of the spectral curves.  This
construction generalises that of \cite{hram}.

\begin{question} Describe the Higgs bundles and the spectral
curves defined by $\mathbb{GH}.$
\end{question}

\bigskip

{\small Acknowledgments: We would like to thank T. G\'omez for his
contributions to our understanding of the Hecke correspondence. We
thank the Isaac Newton Institute, Cambridge, where this work
started, for its very stimulating mathematical environment, and to
CONACYT Grant-128250 for the support. The authors also thank the
referee for a careful reading of the manuscript and the many
comments. The first author is a member of the research group VBAC
and thanks J. Heinloth, R. Ramadas and U. Bruzzo for very helpful
conversations as well as V. Mu\~noz and V. Balaji for their
comments and suggestions about the paper. She also thanks CMR,
Barcelona and ICTP, Trieste  for their hospitality as well as for
the support during completion of this work, and for the great
research environment. }


\end{document}